\newcommand{\prn}[1]{\left(#1\right)}
\newcommand{\brk}[1]{\left[#1\right]}
\newcommand{\BRK}[1]{\left\{#1\right\}}
\newcommand{\abs}[1]{\left|#1\right|}
\newcommand{\ud}[1]{\, \mathrm{d}#1}
\newcommand{\pd}[2]{\frac{\partial#1}{\partial#2}}
\newcommand{\sign}{{\rm sign}}
\newcommand{\goto}{{\rightarrow}}
\theoremstyle{plain}
\newtheorem{lem}{Lemma} 
\newtheorem{thm}[lem]{Theorem} 
\newtheorem{cor}[lem]{Corollary} 
\theoremstyle{definition}
\newtheorem{rem}[lem]{Remark} 
\begin{document}
\title[Exactly conservative particle method for conservation laws]
{An exactly conservative particle method for one dimensional scalar conservation laws}

\author{Yossi Farjoun \and Benjamin Seibold}
\address{Department of Mathematics \\ Massachusetts Institute of Technology \\
77 Massachusetts Avenue \\ Cambridge MA 02139, USA}
\email[Y.~Farjoun]{yfarjoun@math.mit.edu}
\email[B.~Seibold]{seibold@math.mit.edu}

\subjclass[2000]{35L65, 65M25}
\keywords{conservation law, meshfree, particle management}

\thanks{The authors would like to thank R.~LeVeque for helpful comments and
suggestions. The support by the National Science Foundation is acknowledged.
Y.~Farjoun was partially supported by grant DMS--0703937.
B.~Seibold was partially supported by grant DMS--0813648.}

\begin{abstract}
A particle scheme for scalar conservation laws in one space dimension is presented.
Particles representing the solution are moved according to their characteristic velocities.
Particle interaction is resolved locally, satisfying exact conservation of area.
Shocks stay sharp and propagate at correct speeds, while rarefaction waves are created
where appropriate. The method is variation diminishing, entropy decreasing, exactly
conservative, and has no numerical dissipation away from shocks. Solutions, including the
location of shocks, are approximated with second order accuracy. Source terms can be
included. The method is compared to CLAWPACK in various examples, and found to yield a
comparable or better accuracy for similar resolutions.
\end{abstract}
\maketitle

\section{Introduction}
Conservation laws are important models for the evolution of continuum quantities,
describing shocks and rarefaction behavior. Fundamental mathematical properties are global
and local conservation, the presence of similarity solutions, and the method of
characteristics. Successful numerical methods employ these properties to their advantage:
Finite difference methods yield correct shock speeds if applied in conservation form.
Finite volume methods are fundamentally based on conservation properties. Godunov
schemes \cite{Godunov1959}, front tracking methods \cite{HoldenHoldenHeghKrohn1988}, and
many related approaches, approximate the global solution by local similarity solutions.
The method of characteristics is used in the CIR method \cite{CourantIsaacsonRees1952} in
combination with an interpolation scheme. Although for scalar equations it provides a
direct formula for the solution (where it is smooth), it is less popular, since it does
not possess conservation properties. Consequently, basic CIR schemes do not yield correct
shock speeds.

Many commonly used numerical methods operate on a fixed Eulerian grid. Advantages are
simple data structures and an easy generalization to higher space dimensions. Eulerian
schemes can be constructed by tracking the ``correct'' approximate solution for a short
time step, either by solving local Riemann problems (Godunov \cite{Godunov1959}) or by
tracing characteristics (CIR), followed by an interpolation step, at which the solution
is remapped onto the fixed grid. This ``remeshing'' step generally yields numerical
dissipation and dispersion. Since the shortest interaction time between shocks or
characteristics determines the global time step, remeshing is performed unnecessarily
in many places. In practice, Eulerian methods require sophisticated schemes to obtain
solutions with sharp features, but without creating oscillation. Finite volume methods
are equipped with limiters \cite{VanLeer1974}, while finite difference methods use
nonlinear approximations, such as ENO \cite{HartenEngquistOsherChakravarthy1987}
or WENO \cite{LiuOsherChan1994}.

An alternative approach is to abandon the Eulerian property, and thus avoid remeshing.
Godunov methods become front tracking methods, at least in one space dimension. While in
the former the interaction of shocks is avoided by remeshing, in the latter it is resolved
after approximating the flux function by a piecewise linear function. By construction,
front tracking is successful when representing shocks, but cumbersome when approximating
smooth parts of the solution. Similarly, CIR methods become Lagrangian particle methods.
Particles carry function values and move with their characteristic velocities.
As motivated in \cite{FarjounSeibold2009_1}, this provides a simple and accurate solution
method for conservation laws, without ever approximating derivatives. 
However, particle management is required, for two reasons: First, neighboring particles
may depart from each other, resulting in poorly resolved regions. This is prevented by
inserting particles into gaps. Second, particles may collide. If left unchecked, such a
shock event leads to a ``breaking wave'' solution. This is prevented by merging
particles upon collision.

Lagrangian particle methods have been successfully applied in the simulation of fluid
flows. Examples are vortex methods \cite{Chorin1973}, smoothed particle hydrodynamics
(SPH) \cite{Lucy1977,GingoldMonaghan1977,Monaghan2005},
or generalized SPH methods \cite{Dilts1999}.
The solution is approximated on a cloud of points which move with the flow, thus the
governing equations are discretized in their more natural Lagrangian frame of reference.
In specific applications, more accurate solutions may be obtained than with fixed grid
approaches. In addition, with particles local adaptivity is a straightforward extension.

The particle method presented here combines the method of characteristics
(where the solution is smooth) and particle merges (at shocks). The evolution of
area between neighboring particles is derived from local similarity solutions.
The method is designed to conserve area exactly.

\subsection{Formulation of the Particle Method}
\label{subsec:particle_method}
The simplest form of a one dimensional scalar conservation law is
\begin{equation}
u_t+f(u)_x = 0, \quad u(x,0) = u_0(x)
\label{eq:conservation_law_space_indep}
\end{equation}
with $f'$ continuous. The characteristic equations \cite{Evans1998}
\begin{equation}
\begin{cases}
\dot x = f'(u) \\
\dot u = 0
\end{cases}
\label{eq:characteristic_equations_space_indep}
\end{equation}
yield the solution (while it is smooth) forward in time:
At each point $(x_0,u_0(x_0))$ a characteristic curve $x(t) = x_0+f'(u_0(x_0))t$ starts,
carrying the function value $u(x(t),t) = u_0(x_0)$. While the particle method is
presented here for the simple case \eqref{eq:conservation_law_space_indep}, the method of
characteristics applies in more general cases, such as space-dependent flux functions and
source terms (see Sect.~\ref{sec:sources}). When characteristic curves collide, a shock
arises. It moves at a speed so that area (under the function $u(\cdot,t)$) evolves
correctly with respect to \eqref{eq:conservation_law_space_indep}. The Rankine-Hugoniot
condition \cite{Evans1998} follows from this principle. If the flux function $f$ is convex
or concave between the left and right state of a discontinuity, then the solution forms
either a shock or a rarefaction wave, i.e.~a continuous function connecting the two
states. Otherwise, combinations of shocks and rarefactions can result. These physical
solutions are defined by a weak formulation of \eqref{eq:conservation_law_space_indep}
accompanied by an entropy condition \cite{Evans1998}.

The first step in a particle method is to approximate the initial function $u_0$ by a
finite number of points $x_1\le\dots\le x_m$ with function values $u_1,\dots,u_m$.
In Sect.~\ref{sec:sampling_initial_data}, we present strategies on how to sample the
initial function ``well''.
The evolution of the solution is found by moving each particle $x_i$ with speed $f'(u_i)$.
This is possible as long as there are no ``collisions'' between particles. Two neighboring
particles $x_i(t)$ and $x_{i+1}(t)$ collide at time \mbox{$t+\varDelta t_i$}, where
\begin{equation}
\varDelta t_i = -\frac{x_{i+1}-x_i}{f'(u_{i+1})-f'(u_i)}\;.
\label{eq:intersection_time}
\end{equation}
A positive $\varDelta t_i$ indicates that the two particles at $x_i$ and $x_{i+1}$ will
eventually collide. Thus, $t+\varDelta t_{\text s}$ is the time of the next particle
collision, where
\begin{equation}
\varDelta t_{\text s} = \min\BRK{\BRK{\varDelta t_i | \varDelta t_i\ge 0}\cup\infty}\;.
\label{eq:time_step}
\end{equation}
For any time increment $\varDelta t\le \varDelta t_{\text s}$ the particles can be moved
directly to their new positions $x_i+f'(u_i)\varDelta t$. Thus, we can step forward 
in time an amount $\varDelta t_{\text s}$. Then, at least one particle will share its
position with another. To proceed further, we merge each such pair of particles.
If the collision time $\varDelta t_i$ is negative, the particles depart from each other.
Although at each of the particles the correct function value is preserved, after some
time their distance may be unsatisfyingly large, as the amount of error introduced during
a merge grows with the size of the neighboring gaps. To avoid this, we insert new
particles into large gaps (see Sect.~\ref{subsec:particle_management}) \emph{before}
merging particles.

In this paper, we present a method of merging and inserting particles in such a way that
shocks move at correct speeds, and rarefactions have the correct shape. The strategy is
based on mimicking the evolution of area for a conservation law, as is derived in
Sect.~\ref{sec:conservation_law_area}. The definition of an area function gives rise
to a natural interpolation between neighboring Lagrangian particles. As presented in
Sect.~\ref{sec:interp_particle_management}, particle management can then be
done to conserve area exactly. The resulting particle method is shown to be TVD.
Since the characteristic equation is solved exactly, and particle management is purely
local, the method yields no numerical dissipation (where solutions are smooth) and
correct shock speeds (where they are not). Specific strategies for sampling the initial
data are discussed in Sect.~\ref{sec:sampling_initial_data}.

In the remaining sections, the method is analyzed and generalized.
In Sect.~\ref{sec:entropy}, we prove that the numerical solutions satisfy the
Kru\v zkov entropy condition, thus showing that the method yields entropy solutions for
convex entropy functions. In Sect.~\ref{sec:inflection_points} we
present how non-convex flux functions can be treated. Strategies to include sources are
presented in Sect.~\ref{sec:sources}. In Sect.~\ref{sec:numerical_results}, we apply the
method to examples and compare it to traditional finite volume methods using
CLAWPACK \cite{Clawpack}. Conclusions are drawn in Sect.~\ref{sec:outlook}, as well as
possible applications and extensions of the method outlined.

\section{Evolution of Area for Scalar Conservation Laws}
\label{sec:conservation_law_area}
Consider a one dimensional scalar conservation law
\begin{equation}
u_t+f(x,u)_x = 0, \quad u(x,0) = u_0(x)\;.
\label{eq:conservation_law}
\end{equation}
Its characteristic equations \cite{Evans1998}
\begin{equation}
\begin{cases}
\dot x = f_u(x,u) \\
\dot u = -f_x(x,u)
\end{cases}
\label{eq:characteristic_equations}
\end{equation}
yield the movement and change of function value of a particle. Let $u(x,t)$ be a solution
of \eqref{eq:conservation_law}. The change of area between two \emph{fixed} points $x_1$
and $x_2$ is solely given by the flux function $f$ as
\begin{equation}
\frac{d}{dt}\int_{x_1}^{x_2}u(x,t)\ud{x} = f(x_1,u(x_1,t))-f(x_2,u(x_2,t))
= -\brk{f}_{x_1}^{x_2}\;.
\label{eq:cons_law_area_Eulerian}
\end{equation}
In contrast, the change of area between two Lagrangian particles $(x_1(t),u_1(t))$
and $(x_2(t),u_2(t))$, i.e.~points that \emph{move} according to
\eqref{eq:characteristic_equations}, is given by
\begin{align}
\frac{d}{dt}\int_{x_1(t)}^{x_2(t)}u(x,t)\ud{x}
&= \prn{f_u(x_2,u_2)u_2-f(x_2,u_2)}-\prn{f_u(x_1,u_1)u_1-f(x_1,u_1)} \nonumber \\
&= F(x_2,u_2)-F(x_1,u_1) = \brk{F}_{(x_1,u_1)}^{(x_2,u_2)}\;,
\label{eq:cons_law_area_Lagrangian}
\end{align}
where $F=f_uu-f$ is the Legendre transform of $f$. That is, $f$ is a Hamiltonian of the
dynamics \eqref{eq:characteristic_equations}, and $F$ is a Lagrangian.
Equation \eqref{eq:cons_law_area_Eulerian} (respectively \eqref{eq:cons_law_area_Lagrangian})
yields the change of area between two Eulerian (Lagrangian) points, only by knowing the
flux $f$ (the Lagrangian $F$) at the two points. Hence, in the same fashion
as \eqref{eq:cons_law_area_Eulerian} can be used to construct a conservative fixed grid
method, we use \eqref{eq:cons_law_area_Lagrangian} to construct a
conservative \emph{particle} method.

Consider an \emph{area value} $A_i(t)$ associated with each particle, such that
$\brk{A}_{x_i}^{x_{i+1}} = A_{i+1}-A_i$ is the area between $x_i$ and $x_{i+1}$.
Assume the values $A_i$ are known at $t=0$. Then we can find the areas at any time
by solving the system arising from equations \eqref{eq:characteristic_equations}
and \eqref{eq:cons_law_area_Lagrangian}
\begin{equation}
\begin{cases}
\dot x_i = f_u(x_i,u_i) \\
\dot u_i = -f_x(x_i,u_i) \\
\dot A_i = F(x_i,u_i)\;.
\end{cases}
\end{equation}
\begin{rem}
\label{rem:F0}
While $\dot f = 0$ (since $f$ is a Hamiltonian of the dynamics), in general
$\dot F\neq 0$. However, if the flux function satisfies
\begin{equation}
f_{xu}f_uu-f_{uu}f_xu-f_xf_u = 0\;,
\label{eq:flux_F0}
\end{equation}
then $\dot F = 0$ (by the chain rule). Property \eqref{eq:flux_F0} is
satisfied for instance if $f = f(u)$ or $f(x,u) = \varphi(x)u^k$. If $\dot F=0$, the
evolution of area is particularly simple, namely $A_i$ changes at a \emph{constant}
rate $F_i$.
\end{rem}

\subsection{Space-independent Flux}
\label{subsec:conservation_law_area_similarity}
Henceforth we only consider flux functions that are independent of the spatial
variable, $f = f(u)$. Thus, by Rem.~\ref{rem:F0}, the area between two Lagrangian
points changes linearly, as does the distance between them
\begin{align}
\frac{d}{dt}\int_{x_1(t)}^{x_2(t)} u(x,t)\ud{x} &= \brk{F(u)}_{u_1}^{u_2}\;,
\label{eq:area_deriv} \\
\frac{d}{dt}(x_2(t)-x_1(t)) &= \dot{x}_2(t)-\dot{x}_1(t) = f'(u_2)-f'(u_1)
= \brk{f'(u)}_{u_1}^{u_2}\;.
\label{eq:distance_deriv}
\end{align}
If the two points $x_1$ and $x_2$ move at different speeds, then there is a time $t_0$
(which may be larger or smaller than $t$) at which they have the same position. This
assumes that they remain characteristic points between $t$ and $t_0$, i.e.~they do not
interact with shocks. At time $t_0$, the distance and the area between the two points
vanish. From \eqref{eq:area_deriv} and \eqref{eq:distance_deriv} we have that
\begin{align*}
\int_{x_1(t)}^{x_2(t)} u(x,t)\ud{x} &= (t-t_0) \cdot \brk{F(u)}_{u_1}^{u_2} \;, \\
x_2(t)-x_1(t) &= (t-t_0) \cdot \brk{f'(u)}_{u_1}^{u_2}\;.
\end{align*}
In short, the area between two Lagrangian points can be written as
\begin{equation}
\int_{x_1(t)}^{x_2(t)} u(x,t)\ud{x} = (x_2(t)-x_1(t))\,a_f(u_1,u_2) \;,
\label{eq:area}
\end{equation}
where $a_f(u_1,u_2)$ is the nonlinear average function
\begin{equation}
a_f(u_1,u_2)
= \frac{\brk{f'(u)u-f(u)}_{u_1}^{u_2}}{\brk{f'(u)}_{u_1}^{u_2}}
= \frac{\int_{u_1}^{u_2}f''(u)\,u\ud{u}}{\int_{u_1}^{u_2}f''(u)\ud{u}}\;.
\label{eq:nonlinear_average}
\end{equation}
If there is only one flux function, we drop the subscript, and simply write $a(u_1,u_2)$.
The integral form shows that $a$ is indeed an average of $u$, weighted by $f''$.
The evolution of area \eqref{eq:area} is independent of the specific solution, since by
assumption we have excluded all solutions for which a shock would interact with either
characteristic point. The following lemma describes some properties of the nonlinear
average $a(\cdot,\cdot)$.
\begin{lem}
\label{thm:average_properties}
Let $f$ be strictly convex in $[u_{{}_L},u_{{}_U}]$, that is,
$f''>0$ in $(u_{{}_L},u_{{}_U})$.
Then for all $u_1, u_2\in [u_{{}_L},u_{{}_U}]$, the
average \eqref{eq:nonlinear_average}
is\dots
\begin{enumerate}
\item the same for $f$ and $-f$;
\item symmetric, $a(u_1,u_2) = a(u_2,u_1)$;
\item an average, i.e.~$a(u_1,u_2)\in(u_1,u_2)$, for $u_1\neq u_2$;
\item strictly increasing in both $u_1$ and $u_2$; and
\item continuous at $u_1=u_2$, with $a(u,u)=u$.
\end{enumerate}
\end{lem}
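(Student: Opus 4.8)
The plan is to work throughout with the integral representation on the right of \eqref{eq:nonlinear_average}, $a(u_1,u_2)=\int_{u_1}^{u_2} f''(u)\,u\ud{u}\big/\int_{u_1}^{u_2} f''(u)\ud{u}$, which displays $a$ as a weighted average of the integration variable against the weight $f''$. Properties (1) and (2) then drop out of the algebraic structure alone: replacing $f$ by $-f$ negates $f''$ in numerator and denominator simultaneously, so the quotient is unchanged, and interchanging $u_1\leftrightarrow u_2$ negates both integrals, again leaving the ratio fixed.

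For property (3) I would use (2) to assume $u_1<u_2$. On the open interval the weight is strictly positive, $f''>0$, and $u_1<u<u_2$ holds pointwise there; multiplying by the positive weight and integrating yields the strict inequalities $u_1 D<\int_{u_1}^{u_2} f''(u)\,u\ud{u}<u_2 D$, where $D=\int_{u_1}^{u_2} f''(u)\ud{u}>0$. Dividing by $D$ gives $u_1<a(u_1,u_2)<u_2$. Property (5) follows at once: since $a$ is trapped strictly between $\min(u_1,u_2)$ and $\max(u_1,u_2)$, the squeeze theorem forces $a(u_1,u_2)\to u$ as $(u_1,u_2)\to(u,u)$, so the assignment $a(u,u)=u$ makes $a$ continuous across the diagonal.

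The substantive computation is property (4). Writing $N=\int_{u_1}^{u_2} f''(u)\,u\ud{u}$ and $D$ as above, so $a=N/D$, I would differentiate using the fundamental theorem of calculus, $\partial_{u_2}N=f''(u_2)u_2$ and $\partial_{u_2}D=f''(u_2)$. The quotient rule collapses cleanly, since $u_2 D-N=D(u_2-a)$, to
\[
\partial_{u_2}a=\frac{f''(u_2)}{D}\,(u_2-a),\qquad\text{and symmetrically}\qquad
\partial_{u_1}a=\frac{f''(u_1)}{D}\,(a-u_1).
\]
The sign is then read off from property (3): for $u_1<u_2$ one has $D>0$, $f''>0$ on the interior, and $u_1<a<u_2$, so both partials are strictly positive; for $u_1>u_2$ the denominator and the averaging inclusion both reverse, so the products stay strictly positive.

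The step I expect to need the most care is the behaviour of (4) on the diagonal $u_1=u_2$, where $D\to0$ and the derivative formulas become $0/0$, so positivity cannot be read off pointwise there. I would close this gap by combining the strict positivity of the derivatives off the diagonal with the continuity from (5): a function that is continuous everywhere and has strictly positive derivative off a single point is still strictly increasing through that point; equivalently, for $u_2>u_1$ property (3) already gives $a(u_1,u_2)>u_1=a(u_1,u_1)$ directly. A secondary bookkeeping point is that $f''$ is assumed positive only on the open interval, so one should phrase strict monotonicity as strict increase of $a$ (verified by integrating the positive interior derivative) rather than insisting on a nonzero derivative at the endpoints.
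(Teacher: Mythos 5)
Your argument is correct, and for properties (1), (2), (3), and (5) it coincides with the paper's proof (sign flip in numerator and denominator, pointwise bounding of the weighted integrand, squeeze theorem at the diagonal). The genuine difference is in property (4). The paper avoids differentiation entirely: for $u_1<u_2<u_3$ it splits $\int_{u_1}^{u_3}f''(u)\,u\ud{u}$ at $u_2$ and rewrites $a(u_1,u_3)$ as a convex combination of $a(u_1,u_2)$ and $a(u_2,u_3)$ with weights proportional to $\int_{u_1}^{u_2}f''$ and $\int_{u_2}^{u_3}f''$; property (3) gives $a(u_1,u_2)<u_2<a(u_2,u_3)$, so the combination strictly exceeds $a(u_1,u_2)$. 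Your route computes
\begin{equation*}
\pd{a}{u_2}=\frac{f''(u_2)}{D}\,(u_2-a),\qquad
\pd{a}{u_1}=\frac{f''(u_1)}{D}\,(a-u_1),
\end{equation*}
and reads off the sign from (3). Both are valid. Your version requires the extra care you correctly flag at the diagonal (where $D\to 0$) and at the endpoints of $[u_{{}_L},u_{{}_U}]$ (where $f''$ need only be nonnegative), whereas the paper's decomposition argument sidesteps both issues and needs no smoothness of $a$ at all; on the other hand, your derivative formulas handle every ordering of the arguments uniformly and are reused in spirit in the paper's appendix, where essentially the same expression for $\pd{a}{u}$ is derived to bound the derivative of $a$ above and below for Lemmas~\ref{lem:merge:lower_bound} and~\ref{lem:merge:upper_bound}.
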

\noindent
Due to the first two properties, we can assume WLOG that $f''>0$ and $u_1\le u_2$
whenever convenient.
\begin{proof}
We prove the claims in turn.
\begin{enumerate}
\item[(1,2)] Multiplying both numerator and denominator by $-1$ yields the proof:
\begin{align*}
a_f(u_1,u_2) &= \frac{-\int_{u_1}^{u_2}f''(u)\,u
\ud{u}}{-\int_{u_1}^{u_2}f''(u) \ud{u}}
= \frac{\int_{u_1}^{u_2}-f''(u)\,u \ud{u}}{\int_{u_1}^{u_2}-f''(u)
\ud{u}}= a_{-f}(u_1,u_2)\\
&=\frac{\int_{u_2}^{u_1}f''(u)\,u \ud{u}}{\int_{u_2}^{u_1}f''(u) \ud{u}}= a_f(u_2,u_1)\;.
\end{align*}
\addtocounter{enumi}{2}
\item\label{prop:avg}
We bound $a$ from above:
\begin{equation*}
a(u_1,u_2) =  \frac{\int_{u_1}^{u_2}f''(u)\,u \ud{u}}{\int_{u_1}^{u_2}f''(u) \ud{u}}
< \frac{u_2\int_{u_2}^{u_1}f''(u) \ud{u}}{\int_{u_2}^{u_1}f''(u) \ud{u}}= u_2\;.
\end{equation*}
A similar argument bounds $a$ from below.
\item
We show that $a(u_1,u_2)$ is strictly increasing in the second argument.
Let $u_1<u_2<u_3$, $u_i\in [u_{{}_L}, u_{{}_U}]$. Then
\begin{align*}
a(u_1, u_3)
&=\frac{\int_{u_1}^{u_2}f''(u)u\ud{u}+\int_{u_2}^{u_3}f''(u)u\ud{u}}
{\int_{u_1}^{u_3}f''(u)\ud{u}} \\
&=\frac{a(u_1,u_2)\int_{u_1}^{u_2}f''(u)\ud{u}+a(u_2,u_3)\int_{u_2}^{u_3}f''(u)\ud{u}}
{\int_{u_1}^{u_3}f''(u)\ud{u}}\;.
\end{align*}
Due to property (\ref{prop:avg}) we have that $a(u_1,u_2)<u_2<a(u_2,u_3)$. Thus
\begin{align*}
a(u_1,u_3)&>\frac{a(u_1,u_2)\int_{u_1}^{u_2}f''(u)\ud{u}
+a(u_1,u_2)\int_{u_2}^{u_3}f''(u)\ud{u}}
{\int_{u_1}^{u_3}f''(u)\ud{u}}
= a(u_1, u_2)\;.
\end{align*}
A similar argument shows the result for the first argument.
\item 
Since $u_1<a(u_1,u_2)<u_2$ for all $u_1\ne u_2$,
we have (by the Sandwich Theorem) that 
\begin{equation*}
u = \lim_{u_1\goto u}u_1\le \lim_{u_1, u_2\goto u} a(u_1,u_2)\le\lim_{u_2\goto u}u_2 = u\;.
\end{equation*}
Therefore, $\lim\limits_{u_1, u_2\goto u} a(u_1,u_2)=u$.
\qedhere
\end{enumerate}
\end{proof}

\section{Interpolation and Particle Management}
\label{sec:interp_particle_management}
The time evolution of equation \eqref{eq:conservation_law_space_indep} is described by the
characteristic movement of the particles \eqref{eq:characteristic_equations}. Particle
management is an ``instantaneous'' operation (i.e.~happening at constant time)
that allows the method to continue stepping
forward in time. It is designed to conserve area: The function value of an inserted or
merged particle is chosen such that area is unchanged by the operation. A simple condition
guarantees that the entropy does not increase. In addition, we define an interpolating
function between two neighboring particles, so that the change of area under the
interpolating curve satisfies relation \eqref{eq:area_deriv}. This interpolation is shown
to be an analytical solution of the conservation law.

\subsection{Conservative Particle Management}
\label{subsec:particle_management}
Consider four neighboring particles located at
\mbox{$x_1<x_2\le x_3<x_4$}\footnote{If more than two particles are at one position ($x$),
all but the one with the smallest value ($u$) and the one with the largest value ($u$)
are removed immediately.}
with associated function values $u_1$, $u_2$, $u_3$, $u_4$. Assume that the flux $f$ is
strictly convex or concave on the range of function values $[\min_i(u_i),\max_i(u_i)]$.
If $u_2\neq u_3$, the particles' velocities must differ $f'(u_2)\neq f'(u_3)$, which
gives rise to two possible cases that require particle management:
\begin{itemize}
\item \textbf{Inserting:}
The two particles deviate, i.e.~$f'(u_2)<f'(u_3)$. If $x_3-x_2\ge d_\text{max}$ for
some predefined maximum distance $d_\text{max}$, we insert a new particle
$(x_{23},u_{23})$ with $x_2<x_{23}<x_3$, such that the area is preserved:
\begin{equation}
(x_{23}-x_2)\,a(u_2,u_{23})+(x_3-x_{23})\,a(u_{23},u_3) = (x_3-x_2)\,a(u_2,u_3) \;.
\label{eq:area_cond_insert}
\end{equation}
One can, for example, set $x_{23}=\frac{x_2+x_3}{2}$ and find $u_{23}$
by \eqref{eq:area_cond_insert}, or set $u_{23}=\frac{u_2+u_3}{2}$ and find $x_{23}$
by \eqref{eq:area_cond_insert}.
\item \textbf{Merging:}
The two particles collide, i.e.~$f'(u_2)>f'(u_3)$. If $x_3-x_2\le d_\text{min}$ for
some predefined minimum distance ($d_\text{min}=0$ is possible), we replace
both with a new particle $(x_{23},u_{23})$ with $x_2\le x_{23}\le x_3$, such that the
area is preserved:
\begin{align}
(x_{23}-x_1)\,a(u_1,u_{23})+(x_4-x_{23})\,a(u_{23},u_4)& \label{eq:area_cond_merge} \\
= (x_2\!-\!x_1)\,a(u_1,u_2)+(x_3\!-\!x_2)\,a(u_2,u_3)&+(x_4\!-\!x_3)\,a(u_3,u_4)\;.
\nonumber
\end{align}
We choose $x_{23}=\frac{x_2+x_3}{2}$, and then find $u_{23}$ such that
\eqref{eq:area_cond_merge} is satisfied.
Figure~\ref{fig:merging} illustrates the merging step.
\end{itemize}
Observe that inserting and merging are similar in nature.
Conditions \eqref{eq:area_cond_insert} and \eqref{eq:area_cond_merge} for $u_{23}$ are
nonlinear (unless $f$ is quadratic, see Rem.~\ref{rem:quadratic_flux}).
For most cases $u_{23}=\frac{u_2+u_3}{2}$ is a good initial guess, and the correct value
can be obtained (up to the desired precision) by a few Newton iteration steps
(or bisection, if the Newton iteration fails to converge). The next few claims attest
that there is a unique value $u_{23}$ that satisfies \eqref{eq:area_cond_insert}
and \eqref{eq:area_cond_merge}, respectively.
\begin{lem}
The function value $u_{23}$ for the particle at $x_{23}$ for
equations \eqref{eq:area_cond_insert} and \eqref{eq:area_cond_merge}
is unique.
\end{lem}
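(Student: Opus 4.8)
The plan is to treat the insertion condition \eqref{eq:area_cond_insert} and the merging condition \eqref{eq:area_cond_merge} in a uniform way: in each the unknown $u_{23}$ enters only through nonlinear averages whose remaining argument and whose geometric prefactors are fixed, so the entire left-hand side is a scalar function of the single variable $u_{23}$, while the right-hand side is a fixed constant $C$. I would prove that this function is \emph{strictly monotone}; strict monotonicity gives injectivity, hence at most one solution, and combined with continuity it gives existence through the intermediate value theorem. By part (1) of Lemma~\ref{thm:average_properties} the average is invariant under $f\mapsto-f$, so I assume $f''>0$ on the range of values in play.

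For insertion, set $g(v)=(x_{23}-x_2)\,a(u_2,v)+(x_3-x_{23})\,a(v,u_3)$. Since $x_2<x_{23}<x_3$ both prefactors are positive, and by part~(4) of Lemma~\ref{thm:average_properties} each average is strictly increasing in $v$; hence $g$ is strictly increasing and \eqref{eq:area_cond_insert} has at most one root. Because deviation means $f'(u_2)<f'(u_3)$, convexity forces $u_2<u_3$, and evaluating at the endpoints with the strict-average property~(3) yields $g(u_2)<(x_3-x_2)\,a(u_2,u_3)<g(u_3)$; continuity (property~(5)) then locates the unique root in $(u_2,u_3)$.

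For merging, set $h(v)=(x_{23}-x_1)\,a(u_1,v)+(x_4-x_{23})\,a(v,u_4)$. The ordering $x_1<x_2\le x_{23}\le x_3<x_4$ again makes both prefactors positive, so $h$ is strictly increasing by part~(4), giving uniqueness at once. For existence in the genuine collision case $x_2=x_3=x_{23}$ the middle term on the right-hand side of \eqref{eq:area_cond_merge} drops, and since collision means $u_2>u_3$, evaluating $h$ at the two endpoints gives $h(u_3)<C<h(u_2)$, so the root lies in $(u_3,u_2)$.

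The step I expect to be the main obstacle is existence for a merge across a small but positive gap $x_2<x_3$ (permitted when $d_{\text{min}}>0$). There the clean bracket $(u_3,u_2)$ need no longer trap the root; in fact the merged value can overshoot $[\min_i u_i,\max_i u_i]$, because the same area must be carried over the fixed interval $[x_1,x_4]$ by an interpolant with only one interior node. I would close this gap by appealing only to continuity and strict monotonicity of $h$: on a convex interval $[u_{{}_L},u_{{}_U}]$ slightly larger than the data range (the natural standing hypothesis on $f$) the function $h$ sweeps monotonically from below $C$ to above $C$ as $v$ runs from $u_{{}_L}$ to $u_{{}_U}$, so the intermediate value theorem again produces exactly one $u_{23}$.
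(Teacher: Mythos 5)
Your uniqueness argument---the geometric prefactors are positive and each average is strictly increasing in $u_{23}$ by part (4) of Lemma~\ref{thm:average_properties}, so the left-hand side of each of \eqref{eq:area_cond_insert} and \eqref{eq:area_cond_merge} is a strictly increasing, hence injective, function of $u_{23}$---is exactly the paper's proof of this lemma. The additional existence discussion (including your worry about a merge across a positive gap) goes beyond what this statement claims, which is uniqueness only; existence is established separately in Lemmas~\ref{lem:existence_u23_insert} and~\ref{lem:existence_u23} and, for the positive-gap case, under the quantitative hypothesis of Thm.~\ref{thm:merging_TVD}.
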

\begin{proof}
{}From Lemma~\ref{thm:average_properties} we have that both $a(u_1,\cdot)$ and
$a(\cdot,u_4)$ are strictly increasing. Thus, the LHS of both \eqref{eq:area_cond_insert}
and \eqref{eq:area_cond_merge} are strictly increasing (in $u_{23}$), and cannot attain
the same value for different values of $u_{23}$.
\end{proof}
\begin{lem}
\label{lem:existence_u23_insert}
There exists $u_{23}\in [u_2,u_3]$ which
satisfies \eqref{eq:area_cond_insert}. 
\end{lem} 
\begin{proof}
WLOG we assume $u_2\le u_3$. We define 
\begin{align*}
A\phantom{(u)}&=(x_3-x_2)a(u_2,u_3)\\
B(u)&=(x_{23}-x_2)a(u_2,u)+(x_3-x_{23})a(u,u_3)\;.
\end{align*}
So equation \eqref{eq:area_cond_insert} can be recast as $B(u_{23})=A$.
The monotonicity of $a(\cdot,\cdot)$ implies that $B(u_2)<A<B(u_3)$.
Since $a$ is continuous, so is $B$, and the result follows from the Intermediate Value
Theorem. The proof for $u_2>u_3$ is trivially similar.
\end{proof}
\begin{lem}
\label{lem:existence_u23}
If $x_2 = x_3 = x_{23}$, there exists $u_{23}\in [u_2,u_3]$ which
satisfies \eqref{eq:area_cond_merge}. 
\end{lem}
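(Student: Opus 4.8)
The plan is to mirror the strategy used in Lemma~\ref{lem:existence_u23_insert}: substitute the hypothesis $x_2=x_3=x_{23}$ into \eqref{eq:area_cond_merge}, reduce the merge condition to a scalar equation $B(u_{23})=A$, and then invoke the Intermediate Value Theorem.

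First I would simplify the merge condition. Setting $x_2=x_3=x_{23}$ makes the middle term on the right-hand side of \eqref{eq:area_cond_merge} vanish (since $x_3-x_2=0$) and identifies $x_{23}-x_1=x_2-x_1$ and $x_4-x_{23}=x_4-x_3$. Thus \eqref{eq:area_cond_merge} collapses to
\begin{align*}
(x_2-x_1)\,a(u_1,u_{23})+(x_4-x_3)\,a(u_{23},u_4)& \\
=(x_2-x_1)\,a(u_1,u_2)+(x_4-x_3)\,a(u_3,u_4)&\;.
\end{align*}
I would then set $A$ equal to the right-hand side and define $B(u)=(x_2-x_1)\,a(u_1,u)+(x_4-x_3)\,a(u,u_4)$ to be the left-hand side, so that the task becomes finding $u_{23}$ with $B(u_{23})=A$.

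Next I would supply the two ingredients for the IVT. Continuity of $B$ follows from the continuity of $a(\cdot,\cdot)$ (Lemma~\ref{thm:average_properties}, property (5)). For the endpoint bracketing, I would assume WLOG $u_2\le u_3$ (the reverse case is symmetric) and note that the coefficients $x_2-x_1$ and $x_4-x_3$ are strictly positive because $x_1<x_2=x_3<x_4$. Then
\begin{align*}
A-B(u_2)&=(x_4-x_3)\brk{a(u_3,u_4)-a(u_2,u_4)}\ge 0, \\
A-B(u_3)&=(x_2-x_1)\brk{a(u_1,u_2)-a(u_1,u_3)}\le 0,
\end{align*}
where the signs follow from the strict monotonicity of $a$ in each argument (Lemma~\ref{thm:average_properties}, property (4)). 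Hence $B(u_2)\le A\le B(u_3)$, and the Intermediate Value Theorem produces a $u_{23}\in[u_2,u_3]$ with $B(u_{23})=A$.

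The argument is essentially routine once the substitution is carried out, and I do not anticipate a genuine obstacle. The only point needing mild care is the bookkeeping of the reduction of \eqref{eq:area_cond_merge} under $x_2=x_3=x_{23}$: confirming that the middle area term drops out and that the two surviving coefficients remain strictly positive. This is exactly what makes $B$ strictly increasing and continuous, and hence what guarantees both the endpoint bracketing and the existence of the root in $[u_2,u_3]$.
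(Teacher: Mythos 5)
Your proposal is correct and follows exactly the paper's route: the paper likewise reduces \eqref{eq:area_cond_merge} under $x_2=x_3=x_{23}$ to a scalar equation $B(u_{23})=A$ with $A=(x_2-x_1)a(u_1,u_2)+(x_4-x_2)a(u_3,u_4)$ and $B(u)=(x_2-x_1)a(u_1,u)+(x_4-x_2)a(u,u_4)$, then cites monotonicity and continuity of $a$ and the Intermediate Value Theorem, referring back to Lemma~\ref{lem:existence_u23_insert}. Your explicit endpoint computations merely spell out what the paper leaves as ``the proof is identical.''
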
 
\begin{proof}
The proof is identical to the proof of Lemma~\ref{lem:existence_u23_insert} with
the following definition of $A$ and $B(u)$:
\begin{align*}
A\phantom{(u)}&=(x_2-x_1)\,a(u_1,u_2)+(x_4-x_2)\,a(u_3,u_4)\\
B(u) &= (x_2-x_1)\,a(u_1,u\phantom{{}_2})+(x_4-x_2)\,a(u\phantom{{}_3},u_4)\;.
\end{align*}
\end{proof}
\begin{cor}
If particles are merged and inserted according to equations 
(\ref{eq:area_cond_insert}, \ref{eq:area_cond_merge}), then the total variation of the
solution is either the same as before the operation, or smaller.
\end{cor}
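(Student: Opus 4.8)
The plan is to measure the total variation of the particle solution as $\mathrm{TV}=\sum_i \abs{u_{i+1}-u_i}$, the sum of the jumps across successive particle values. This is the right quantity because the interpolant between two neighboring particles is a monotone (rarefaction-type) analytical solution for the strictly convex or concave flux, so its variation on $[x_i,x_{i+1}]$ equals exactly $\abs{u_{i+1}-u_i}$. By property~(1) of Lemma~\ref{thm:average_properties} I may assume $f''>0$, so that $f'$ is increasing and the velocity ordering translates directly into an ordering of the $u_i$. Since both operations are purely local, I only need to track the handful of affected differences and show their contribution does not grow.

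For \textbf{inserting}, the deviation condition $f'(u_2)<f'(u_3)$ gives $u_2<u_3$, and Lemma~\ref{lem:existence_u23_insert} together with uniqueness places the new value in $u_{23}\in[u_2,u_3]$. The single affected difference $\abs{u_3-u_2}$ is replaced by $\abs{u_{23}-u_2}+\abs{u_3-u_{23}}$; since $u_{23}$ lies between $u_2$ and $u_3$, these two pieces telescope to exactly $\abs{u_3-u_2}$, so the total variation is \emph{unchanged}.

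For \textbf{merging}, the collision condition $f'(u_2)>f'(u_3)$ gives $u_2>u_3$, and Lemma~\ref{lem:existence_u23} with uniqueness gives $u_{23}\in[u_3,u_2]$. The three affected differences $\abs{u_2-u_1}+\abs{u_3-u_2}+\abs{u_4-u_3}$ are replaced by $\abs{u_{23}-u_1}+\abs{u_4-u_{23}}$. I would apply the triangle inequality to each new term,
\begin{equation*}
\abs{u_{23}-u_1}\le\abs{u_2-u_1}+\abs{u_{23}-u_2}, \qquad
\abs{u_4-u_{23}}\le\abs{u_4-u_3}+\abs{u_3-u_{23}},
\end{equation*}
and then use $u_3\le u_{23}\le u_2$ to collapse $\abs{u_{23}-u_2}+\abs{u_3-u_{23}}=u_2-u_3=\abs{u_3-u_2}$. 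Summing, the post-merge variation is bounded by the pre-merge variation, so the total variation cannot increase.

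The one step that is not immediate is the containment $u_{23}\in[\min(u_2,u_3),\max(u_2,u_3)]$ in the merge case, since Lemma~\ref{lem:existence_u23} is stated only for a genuine collision $x_2=x_3=x_{23}$. When $d_\text{min}=0$ this is exactly the situation covered by the lemma, and the argument above goes through verbatim. When $d_\text{min}>0$ (so possibly $x_2<x_3$) I would instead have to verify the bracketing $B(u_3)\le A\le B(u_2)$ directly from the area identity \eqref{eq:area_cond_merge} and the monotonicity of $a(\cdot,\cdot)$ from Lemma~\ref{thm:average_properties}. I expect this bracketing to be the main obstacle, but anticipate it follows from the same monotonicity estimates already used to prove Lemma~\ref{lem:existence_u23}.
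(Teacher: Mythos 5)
Your argument is correct and is essentially the one the paper intends: the corollary is stated without an explicit proof as an immediate consequence of the containment $u_{23}\in[\min(u_2,u_3),\max(u_2,u_3)]$ established in Lemmas~\ref{lem:existence_u23_insert} and~\ref{lem:existence_u23}, combined with exactly the telescoping/triangle-inequality bookkeeping you carry out. Your closing caveat about the case $x_2<x_3$ is well placed; the paper resolves it not by re-deriving the bracketing from \eqref{eq:area_cond_merge} directly, but via the sufficient condition \eqref{eq:in_box:cond} of Thm.~\ref{thm:merging_TVD}, which is then invoked in the later TVD theorem.
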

Merging points only when $x_2=x_3$ can be overly restrictive.
The following theorem grants a little more freedom.
\begin{thm}
\label{thm:merging_TVD}
Consider four consecutive particles $(x_i,u_i)\;i=1,\ldots,4$. If
\begin{equation}
\frac{\abs{u_3-u_2}}{x_3-x_2}
\ge 4\prn{\frac{\max\abs{f''}}{\min\abs{f''}}}^6
\frac{\max_i u_i-\min_i u_i}{\min\prn{x_4-x_3,x_2-x_1}}\;,
\label{eq:in_box:cond}
\end{equation}
then merging particles 2 and 3 with $x_{23}=\frac{x_2+x_3}{2}$ yields
$u_{23}\in[u_2,u_3]$.
\end{thm}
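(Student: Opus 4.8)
The plan is to reduce to the situation already handled in Lemma~\ref{lem:existence_u23} by a monotonicity-plus-continuity argument. By properties (1,2) of Lemma~\ref{thm:average_properties} I may assume $f''>0$ and $u_2\le u_3$, so that $\abs{u_3-u_2}=u_3-u_2$; if $x_2=x_3$ the claim is exactly Lemma~\ref{lem:existence_u23}, so I assume $x_2<x_3$. Write $G(u)$ for the left-hand side of \eqref{eq:area_cond_merge} minus its right-hand side, with $x_{23}=\frac{x_2+x_3}{2}$ substituted. The preceding uniqueness lemma shows that $G$ is continuous and strictly increasing, so by the Intermediate Value Theorem it suffices to prove $G(u_2)\le 0\le G(u_3)$; the unique root $u_{23}$ then lies in $[u_2,u_3]$.

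First I would evaluate $G$ at the endpoints. Inserting $x_{23}=\frac{x_2+x_3}{2}$ and cancelling the common pieces against the right-hand side of \eqref{eq:area_cond_merge}, a short telescoping computation gives (after dividing by $x_3-x_2>0$)
\begin{equation*}
\frac{G(u_2)}{x_3-x_2}=\tfrac12\brk{a(u_1,u_2)+a(u_2,u_4)}-a(u_2,u_3)+\frac{x_4-x_3}{x_3-x_2}\brk{a(u_2,u_4)-a(u_3,u_4)}\;,
\end{equation*}
together with the analogous identity for $G(u_3)/(x_3-x_2)$ whose leading term is $\frac{x_2-x_1}{x_3-x_2}\brk{a(u_1,u_3)-a(u_1,u_2)}\ge 0$. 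Since every nonlinear average appearing here lies in $[\min_i u_i,\max_i u_i]$ by property (3), the two bracketed ``bounded'' groups are controlled crudely: the group in $G(u_2)$ is at most $\max_i u_i-\min_i u_i$, and the corresponding group in $G(u_3)$ is at least $-(\max_i u_i-\min_i u_i)$.

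The heart of the argument is a \emph{lower} bound on the monotonicity gap $a(u_3,u_4)-a(u_2,u_4)$ (and symmetrically $a(u_1,u_3)-a(u_1,u_2)$). Differentiating \eqref{eq:nonlinear_average} yields the identity $\pd{a}{u_1}(u_1,u_4)=\frac{f''(u_1)\,(a(u_1,u_4)-u_1)}{\int_{u_1}^{u_4}f''\ud{u}}$; bounding $f''$ two-sidedly by $\min\abs{f''}$ and $\max\abs{f''}$ and using that $a(u_1,u_4)-u_1$ is at least a fixed fraction of $u_4-u_1$ (the same estimate that proves property (3)) gives $\pd{a}{u_1}\ge c\prn{\min\abs{f''}/\max\abs{f''}}^{p}$ \emph{uniformly in $u_4$}. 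Integrating in the shifted argument then produces $a(u_3,u_4)-a(u_2,u_4)\ge c\prn{\min\abs{f''}/\max\abs{f''}}^{p}(u_3-u_2)$, and likewise for the other gap.

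Finally I would combine the pieces: the merging-favorable term in $G(u_2)$ is at least $\frac{x_4-x_3}{x_3-x_2}\,c\prn{\min\abs{f''}/\max\abs{f''}}^{p}(u_3-u_2)$, while the bounded group is at most $\max_i u_i-\min_i u_i$, so requiring the former to dominate the latter forces $\frac{u_3-u_2}{x_3-x_2}$ above a threshold of the stated form; the factor $\min(x_4-x_3,x_2-x_1)$ appears because $G(u_3)\ge 0$ needs the symmetric estimate with $x_2-x_1$ in place of $x_4-x_3$, and one keeps the smaller of the two gaps. I expect the main obstacle to be the uniform lower bound on $\pd{a}{u_1}$ in the third step: one must control how far the $f''$-weighted average sits from the near endpoint with \emph{no} control on the location of the far endpoint $u_4$, and it is the repeated use of the crude two-sided bound $\min\abs{f''}\le f''\le\max\abs{f''}$ there that is responsible for the power of $\max\abs{f''}/\min\abs{f''}$, the exponent $6$ in \eqref{eq:in_box:cond} being a convenient (non-sharp) constant.
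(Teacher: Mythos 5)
Your argument is correct, and it takes a genuinely different route from the paper's. The paper merges in two stages: it first finds an intermediate value $\tilde u$ with $B(\tilde u)=A$ (both particles given the common value $\tilde u$ at their original positions), proves in Lemma~\ref{lem:merge:lower_bound} that $\tilde u$ sits a quantified distance inside $[u_2,u_3]$, proves in Lemma~\ref{lem:merge:upper_bound} that the final root $u_{23}$ of $C(u)=A$ lies within a quantified distance of $\tilde u$, and then checks that the hypothesis makes the first distance dominate the second. You instead work directly with the area-balance function $G(u)=C(u)-A$, use its strict monotonicity (already established in the uniqueness lemma) and exhibit a sign change, $G(u_2)\le 0\le G(u_3)$, via the telescoped identities at the endpoints --- which I have checked and which are exactly right. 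Both proofs ultimately rest on the same ingredient, the two-sided bound $\frac12\prn{\min\abs{f''}/\max\abs{f''}}^2\le \pd{a}{u},\pd{a}{v}\le\frac12\prn{\max\abs{f''}/\min\abs{f''}}^2$, which is precisely the short lemma in the paper's appendix; the ``main obstacle'' you flag is therefore a non-issue, and your own derivative identity $\pd{a}{u_1}=f''(u_1)\prn{a(u_1,u_4)-u_1}/\int_{u_1}^{u_4}f''$ is the one the appendix uses. What your route buys is economy and a sharper constant: the lower bound on the monotonicity gaps costs only $\prn{\min\abs{f''}/\max\abs{f''}}^2$, so you end up needing $\frac{\abs{u_3-u_2}}{x_3-x_2}\ge 2\prn{\max\abs{f''}/\min\abs{f''}}^2\frac{\max_i u_i-\min_i u_i}{\min\prn{x_4-x_3,\,x_2-x_1}}$, which is weaker than (hence implied by) condition \eqref{eq:in_box:cond} with its factor $4$ and exponent $6$, since $\max\abs{f''}/\min\abs{f''}\ge1$. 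What the paper's two-stage decomposition buys in exchange is the auxiliary quantity $\tilde u$ and the explicit bound on $\abs{u_{23}-\tilde u}$, which isolate where the merging error comes from (the relocation of the particle to $x_{23}$, proportional to $x_3-x_2$); your endpoint evaluation hides that structure but is shorter. To make your sketch a complete proof you only need to write out the appendix-style derivative bound and handle the degenerate cases ($x_2=x_3$ via Lemma~\ref{lem:existence_u23}, and $u_2=u_3$ trivially), all of which you have already indicated.
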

The $\min$ and $\max$ of $f''$ are taken over the maximum range of $u_1,\dots,u_4$. 
Condition \eqref{eq:in_box:cond} is trivially satisfied if $x_2=x_3$.

The idea of the proof is to consider merging in two steps. First, we find a value
$\tilde u$ such that setting $u_2=u_3=\tilde u$ (while leaving $x_2$ and $x_3$ unchanged)
preserves the area. Next, we merge the two particles to one with value $u_{23}$
located at $x_{23}$. To prove the theorem we use two lemmas:
Lemma~\ref{lem:merge:lower_bound} bounds $\tilde u$ \emph{away} from $u_2$ and $u_3$
(but inside $[u_2,u_3]$). 
Lemma~\ref{lem:merge:upper_bound} bounds $\abs{u_{23}-\tilde u}$ from above.
We define three ``area functions'':
\begin{align*}
A\phantom{(u)}&= (x_2-x_1)a(u_1,u_2) + (x_3-x_2)a(u_2,u_3)+(x_4-x_3)a(u_3,u_4)\;, \\
B(u)&=(x_2-x_1)a(u_1,u) + (x_3-x_2)a(u,u) + (x_4-x_3)a(u,u_4)\;, \\
C(u)&=(x_2-x_1)a(u_1,u) + (x_3-x_2)\tfrac12\brk{a(u_1,u)+a(u,u_4)}+(x_4-x_3)a(u,u_4)\;.
\end{align*}
Here $A$ is the area before the merge that needs to be preserved, $B(u)$ is the area when
the particles $2, 3$ have the value $u$, and $C(u)$ is the area when particles $2, 3$
have been merged to a single particle at $\frac{x_2+x_3}{2}$ with value $u$.
\begin{lem}\label{lem:merge:lower_bound}
The value $\tilde u$ for which $B(\tilde u) = A$ satisfies $\tilde u\in[u_2, u_3]$ and
\begin{equation*}
\abs{\tilde u - u_i}\ge\frac12
\frac{\min(x_3-x_1,x_4-x_2)\abs{u_3-u_2}}{x_4-x_1}\abs{\frac{\min f''(u)}{\max f''(u)}}^4
\text{ for } i=2, 3\;.
\end{equation*}
\end{lem}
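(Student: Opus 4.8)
The plan is to first establish the existence and location of $\tilde u$, then convert the two gap estimates $\abs{\tilde u-u_2}$ and $\abs{\tilde u-u_3}$ into a lower bound on the increments of $B$ divided by an upper bound on its slope. By properties (1)--(2) of Lemma~\ref{thm:average_properties} (invariance under $f\mapsto-f$ and under the reflection $u\mapsto-u$, which negates all values and areas while preserving the relevant equalities) I may assume $f''>0$ and $u_2\le u_3$; I also take $u_2<u_3$, the equality case being trivial since then $\tilde u=u_2=u_3$. Using $a(u,u)=u$ (property (5)) the middle term collapses, so $B(u)=(x_2-x_1)a(u_1,u)+(x_3-x_2)u+(x_4-x_3)a(u,u_4)$, which is strictly increasing since $a(u_1,\cdot)$, the identity, and $a(\cdot,u_4)$ all are (property (4)). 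A direct computation gives $A-B(u_2)=(x_3-x_2)\prn{a(u_2,u_3)-u_2}+(x_4-x_3)\prn{a(u_3,u_4)-a(u_2,u_4)}$ and, symmetrically, $B(u_3)-A=(x_2-x_1)\prn{a(u_1,u_3)-a(u_1,u_2)}+(x_3-x_2)\prn{u_3-a(u_2,u_3)}$; both are strictly positive because $x_1<x_2$ and $x_3<x_4$. Hence $B(u_2)<A<B(u_3)$, and the Intermediate Value Theorem gives a unique $\tilde u\in(u_2,u_3)$.

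For the quantitative bound, monotonicity and smoothness of $B$ give $\abs{\tilde u-u_2}\ge\prn{A-B(u_2)}/\sup_{[u_2,u_3]}B'$ and $\abs{\tilde u-u_3}\ge\prn{B(u_3)-A}/\sup_{[u_2,u_3]}B'$, so I need an upper bound on $B'$ and lower bounds on the two increments. Writing $\rho=\min\abs{f''}/\max\abs{f''}\in(0,1]$ over the range of the $u_i$, the engine is a two-sided estimate on the partials of the average, obtained by differentiating \eqref{eq:nonlinear_average}: $\partial_2 a(u_1,u_2)=f''(u_2)\prn{u_2-a}/\!\int_{u_1}^{u_2}f''$ and $\partial_1 a(u_1,u_2)=f''(u_1)\prn{a-u_1}/\!\int_{u_1}^{u_2}f''$. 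Bounding $\int f''$ between $\min\abs{f''}$ and $\max\abs{f''}$ times the interval length, and bounding $\abs{u_2-a}$ (resp.\ $\abs{a-u_1}$) above by the interval length and below by $\tfrac12\rho$ times it, yields $\tfrac12\rho^2\le\partial_1 a,\ \partial_2 a\le\rho^{-1}$ throughout the range. The upper bound gives $\sup B'\le\rho^{-1}\brk{(x_2-x_1)+(x_3-x_2)+(x_4-x_3)}=\rho^{-1}(x_4-x_1)$.

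For the increments I bound each summand. The \emph{local} terms satisfy $a(u_2,u_3)-u_2=\int_{u_2}^{u_3}f''(u)(u-u_2)\ud u\,/\!\int_{u_2}^{u_3}f''\ge\tfrac12\rho(u_3-u_2)$ and, symmetrically, $u_3-a(u_2,u_3)\ge\tfrac12\rho(u_3-u_2)$. The \emph{spread} terms carry the weight: writing $a(u_3,u_4)-a(u_2,u_4)=\int_{u_2}^{u_3}\partial_1 a(v,u_4)\ud v$ and applying the lower partial bound gives $a(u_3,u_4)-a(u_2,u_4)\ge\tfrac12\rho^2(u_3-u_2)$, and likewise $a(u_1,u_3)-a(u_1,u_2)\ge\tfrac12\rho^2(u_3-u_2)$. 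Combining, $A-B(u_2)\ge\tfrac12\rho^2(u_3-u_2)\brk{(x_3-x_2)+(x_4-x_3)}=\tfrac12\rho^2(u_3-u_2)(x_4-x_2)$ and $B(u_3)-A\ge\tfrac12\rho^2(u_3-u_2)(x_3-x_1)$.

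Dividing by $\sup B'\le\rho^{-1}(x_4-x_1)$ gives $\abs{\tilde u-u_2}\ge\tfrac12\rho^{3}(x_4-x_2)(u_3-u_2)/(x_4-x_1)$ and $\abs{\tilde u-u_3}\ge\tfrac12\rho^{3}(x_3-x_1)(u_3-u_2)/(x_4-x_1)$; since $x_4-x_2$ and $x_3-x_1$ each dominate $\min(x_3-x_1,x_4-x_2)$, and $\rho\le1$ forces $\rho^{3}\ge\rho^{4}$, both are at least the asserted bound (indeed with one power of $\rho$ to spare). I expect the spread estimate to be the main obstacle: keeping only the local term produces the factor $x_3-x_2$, which is essentially zero at a merge, instead of $\min(x_3-x_1,x_4-x_2)$, so the proof must extract a \emph{uniform} positive lower bound on $\partial_1 a(v,u_4)$ and $\partial_2 a(u_1,v)$ valid for \emph{every} position of $u_4$ relative to $\brk{u_2,u_3}$ — including the removable limit at $v=u_4$, where $\partial_1 a\to\tfrac12$ — so that the large outer gaps $x_4-x_3$ and $x_2-x_1$ are what carry the estimate.
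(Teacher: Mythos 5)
Your proof is correct and follows essentially the same route as the paper's: decompose $A-B(u_2)$ and $B(u_3)-A$ into increments of $a$, bound those below via the two-sided estimates on $\partial a/\partial u$ and $\partial a/\partial v$ (the paper isolates these as a separate appendix lemma), and divide by an upper bound on $B'$ obtained from the same estimates. Your constants come out slightly sharper ($\rho^3$ instead of $\rho^4$) because you use the cruder but here smaller upper bound $\partial a\le\rho^{-1}$ in place of the paper's $\tfrac12\rho^{-2}$, and your handling of the removable singularity of $\partial_1 a(v,u_4)$ at $v=u_4$ is a point the paper glosses over.
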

\begin{lem}\label{lem:merge:upper_bound}
The value $u_{23}$ for which $C(u_{23})=A$ satisfies 
\begin{equation*}
\abs{u_{23}-\tilde u}\le 2
\frac{(x_3-x_2)\brk{\max(u_1,\,u_2,\,u_3,\,u_4)-\min(u_2,u_3)}}{x_4-x_1}
\abs{\frac{\max f''(u)}{\min f''(u)}}^2\;.
\end{equation*}
\end{lem}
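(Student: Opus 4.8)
The plan is to view $\tilde u$ and $u_{23}$ as roots of the two nearby equations $B(\tilde u)=A$ and $C(u_{23})=A$, and to control their separation by the size of the discrepancy $C-B$ divided by a lower bound on the slope of $C$. Since $C(u_{23})=A=B(\tilde u)$, the Mean Value Theorem gives $C(\tilde u)-C(u_{23})=C'(\xi)\prn{\tilde u-u_{23}}$ for some $\xi$ between the two, whence
\[
\abs{u_{23}-\tilde u}\le\frac{\abs{C(\tilde u)-B(\tilde u)}}{\min\abs{C'}}\;,
\]
the minimum being taken over the interval spanned by $\tilde u$ and $u_{23}$. It then suffices to bound the numerator above and the denominator below.

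For the numerator I would first use $a(u,u)=u$ (property (5) of Lemma~\ref{thm:average_properties}) to rewrite the middle term of $B$, so that $B$ and $C$ differ only there:
\[
C(u)-B(u)=\frac{x_3-x_2}{2}\brk{a(u_1,u)+a(u,u_4)-2u}\;.
\]
By property (3) each average lies between its arguments, so $a(u_1,\tilde u),\,a(\tilde u,u_4)\in\brk{\min_i u_i,\,\max_i u_i}$, while $\tilde u\in[u_2,u_3]$. Using $a\le\max_i u_i$ and $\tilde u\ge\min(u_2,u_3)$ controls the bracket from above by $2\prn{\max_i u_i-\min(u_2,u_3)}$; together with the matching lower bound this yields $\abs{C(\tilde u)-B(\tilde u)}\le(x_3-x_2)\prn{\max_i u_i-\min(u_2,u_3)}$.

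For the denominator, differentiating and setting $x_{23}=\tfrac{x_2+x_3}{2}$ gives the positive combination
\[
C'(u)=(x_{23}-x_1)\,\partial_2a(u_1,u)+(x_4-x_{23})\,\partial_1a(u,u_4)\;,
\]
each term positive by the strict monotonicity (property (4)). The crucial ingredient is a uniform lower bound on the derivative of the average. From \eqref{eq:nonlinear_average} one computes $\partial_2a(u_1,u)=f''(u)\prn{u-a(u_1,u)}\big/\!\int_{u_1}^{u}f''(s)\ud{s}$; estimating $\abs{u-a(u_1,u)}\ge\tfrac12\abs{\tfrac{\min f''}{\max f''}}\abs{u-u_1}$ together with the crude bounds on $f''(u)$ and on $\int_{u_1}^u f''$ gives $\partial_2a\ge\tfrac12\abs{\tfrac{\min f''}{\max f''}}^2$, and likewise for $\partial_1a$. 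Since $(x_{23}-x_1)+(x_4-x_{23})=x_4-x_1$, this produces $\min\abs{C'}\ge\tfrac{x_4-x_1}{2}\abs{\tfrac{\min f''}{\max f''}}^2$, and dividing the two estimates yields the stated bound.

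The main obstacle is the lower bound on $C'$: the derivative $\partial_2a(u_1,u)$ can be as small as order $\abs{\tfrac{\min f''}{\max f''}}^2$ when $f''$ is concentrated away from $u$, so a one-sided estimate is too weak and one must extract this uniform bound from the \emph{two-sided} control of both $u-a(u_1,u)$ and the integral $\int_{u_1}^u f''$; this is precisely what produces the square of the ratio. A secondary subtlety lives in the numerator: the naive two-sided bound on $a(u_1,\tilde u)+a(\tilde u,u_4)-2\tilde u$ only controls it by $2\prn{\max_i u_i-\min_i u_i}$, so sharpening the lower direction to $\min(u_2,u_3)$ as stated requires invoking the location of $\tilde u$ from Lemma~\ref{lem:merge:lower_bound} (equivalently, coupling the numerator and denominator estimates rather than bounding them independently). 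In any case the looser constant already suffices for the subsequent theorem.
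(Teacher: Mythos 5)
Your proof follows the paper's own argument step for step: the Mean Value Theorem applied to $C$ using $C(u_{23})=A=B(\tilde u)$, the identity $C(u)-B(u)=\frac{x_3-x_2}{2}\brk{a(u_1,u)+a(u,u_4)-2u}$, and the lower bound $C'\ge\frac{x_4-x_1}{2}\prn{\min\abs{f''}/\max\abs{f''}}^2$ obtained from the two-sided derivative estimate on $a$ — which the paper isolates as a separate short lemma at the start of the appendix and proves exactly as you sketch, from the explicit formula for $\partial a/\partial u$.

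The one substantive point is the numerator, and you half-spot it. The paper writes $\abs{C(\tilde u)-A}=C(\tilde u)-B(\tilde u)$ and bounds only the upper direction, using $a(u_1,\tilde u),\,a(\tilde u,u_4)\le\max_i u_i$ and $\tilde u\ge\min(u_2,u_3)$; your claimed ``matching lower bound'' with the constant $\max_i u_i-\min(u_2,u_3)$ does not exist, and — contrary to your suggested repair — invoking $\tilde u\in\brk{u_2,u_3}$ from Lemma~\ref{lem:merge:lower_bound} does not produce it. Take $u_1=u_4=0$, $u_2=10$, $u_3=9$, $f(u)=\tfrac12u^2$: then $\tilde u\approx 9.5$ and $2\tilde u-a(u_1,\tilde u)-a(\tilde u,u_4)=\tilde u\approx 9.5$, whereas $2\prn{\max_i u_i-\min(u_2,u_3)}=2$. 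The lower direction is governed by $\max(u_2,u_3)-\min_i u_i$, not by $\max_i u_i-\min(u_2,u_3)$, so neither your argument nor the paper's establishes the constant exactly as stated. Your fallback is the correct resolution: since $a(u_1,\tilde u)$, $a(\tilde u,u_4)$ and $\tilde u$ all lie in $\brk{\min_i u_i,\max_i u_i}$, one gets $\abs{C(\tilde u)-B(\tilde u)}\le(x_3-x_2)\prn{\max_i u_i-\min_i u_i}$ unconditionally, and this weaker form of the lemma is all that the proof of Thm.~\ref{thm:merging_TVD} actually uses, since its chain of inequalities already passes through $\max_i u_i-\min_i u_i$.
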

The proofs of the last two lemmas are tedious and uninspiring; they are relegated to the
appendix for the interested reader's perusal.
\begin{proof}[of Thm.~\ref{thm:merging_TVD}]
Starting from the hypothesis of the theorem, we find
\begin{align}
\frac{1}{2}\frac{{\min\prn{x_4-x_3,x_2-x_1}}\abs{u_3-u_2}}{x_4-x_1}
&\prn{\frac{ \min\abs{f''}}{\max\abs{f''}}}^4\notag\\
\ge 2\prn{\frac{ \max\abs{f''}}{\min\abs{f''}}}^2
&\frac{(\max_i u_i-\min_i u_i)(x_3-x_2)}{x_4-x_1}\notag\\
\ge 2\prn{\frac{ \max\abs{f''}}{\min\abs{f''}}}^2
&\frac{(\max_i u_i-\min\prn{u_2,u_3})(x_3-x_2)}{x_4-x_1}\;.\notag
\end{align}
Using Lemmas~\ref{lem:merge:lower_bound} and~\ref{lem:merge:upper_bound}, we obtain 
\begin{equation*}
\abs{\tilde u-u_i}\ge\abs{u_{23}-\tilde u},
\end{equation*} 
for $i=2,3$. 
Since we also have that $\tilde u\in[u_2,u_3]$ (from Lemma~\ref{lem:merge:upper_bound}), 
we conclude that $u_{23}\in [u_2,u_3]$.
\end{proof}
\begin{rem}
\label{rem:merging_robust}
Due to Thm.~\ref{thm:merging_TVD}, the merging step is robust with respect to
small deviations in the distance of the merged particles. This also holds for the case
$x_2>x_3$, given the distance $\abs{x_3-x_2}$ is sufficiently small.
\end{rem}
\begin{thm}
\label{thm:arbitrary_times}
The particle method can run to arbitrary times.
\end{thm}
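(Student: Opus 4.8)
The plan is to show that the scheme never stalls: in any finite interval $[0,T]$ only finitely many particle-management events (merges and insertions) occur, so that the method reaches $T$ after finitely many steps, with pure characteristic motion in between.

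First I would establish a maximum principle for the function values. Characteristic motion leaves each $u_i$ unchanged, while both insertion and merging produce a new value $u_{23}\in[u_2,u_3]$ (Lemma~\ref{lem:existence_u23_insert}, Lemma~\ref{lem:existence_u23}, and Thm.~\ref{thm:merging_TVD}), hence inside the convex hull of the neighboring values. Consequently every $u_i(t)$ stays in the initial range $[u_{{}_L},u_{{}_U}]=[\min_j u_j(0),\max_j u_j(0)]$, so all velocities $f'(u_i)$ lie in the bounded set $f'([u_{{}_L},u_{{}_U}])$ and the maximal relative speed $V=\max f'-\min f'$ over this range is finite. For $t\in[0,T]$ this also confines the particles to a bounded spatial interval of some length $L$, since each travels at speed at most $\max\abs{f'}$.

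Next I would argue that the method cannot get stuck at a single instant. At a fixed time only finitely many operations are possible: each insertion requires a gap of length at least $d_\text{max}$ and, choosing $x_{23}=\tfrac12(x_2+x_3)$, replaces it by two gaps each at most $\tfrac12 d_\text{max}$, so a given gap can be subdivided only finitely often before falling below the threshold, and there are finitely many gaps; each merge strictly decreases the particle count by one. Once all such operations are carried out, every surviving pair is properly ordered with $x_i<x_{i+1}$, so the next collision time $\varDelta t_{\text s}$ from \eqref{eq:time_step} is strictly positive and the scheme advances. Here I would invoke Rem.~\ref{rem:merging_robust}: since merging remains valid (yielding $u_{23}\in[u_2,u_3]$) for a whole neighborhood of near-collision configurations, the step need not be refined to catch an exact coincidence $x_2=x_3$, so there is no artificial collapse of $\varDelta t_{\text s}$ to zero.

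The main obstacle is ruling out a Zeno-type accumulation of events. For insertions I would use the velocity bound: a gap just created by an insertion has length at most $\tfrac12 d_\text{max}$ and grows at rate at most $V$, so it cannot reach $d_\text{max}$ again for a time at least $d_\text{max}/(2V)$; since at most $L/d_\text{max}$ disjoint gaps of length $\ge d_\text{max}$ can coexist, the number of insertions in $[0,T]$ is finite, bounded in terms of $T$, $V$, $L$, and $d_\text{max}$. For merges I would use a conservation-of-count argument: the particle number increases only through insertions, decreases only through merges, and never drops below one, so the total number of merges in $[0,T]$ is at most the initial count plus the (finite) number of insertions. Hence only finitely many events occur in $[0,T]$, and the scheme reaches $T$ in finitely many steps. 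The delicate point is the insertion bound, which is precisely why the spatial confinement and the uniform velocity bound $V$ from the maximum principle are essential; everything else reduces to the monotonicity and existence properties already established.
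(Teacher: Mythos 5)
Your proof is correct and rests on the same two pillars as the paper's own argument: the uniform bound on relative characteristic speeds over the (invariant) range of function values, and the observation that merges only decrease the particle count while insertions create gaps of length at least $d_{\max}/2$. You package these as a finite count of management events on $[0,T]$ rather than the paper's lower bound on successive time increments, and in doing so you make explicit some bookkeeping (spatial confinement, the bound on the number of insertions, termination of same-instant operations) that the paper's terser proof leaves implicit.
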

\begin{proof}
Let $u_{{}_L}=\min_i u_i$, $u_{{}_U}=\max_i u_i$, and
$C=\max_{[u_{{}_L},u_{{}_U}]}|f''(u)|\cdot(u_{{}_U}-u_{{}_L})$.
For any two particles, one has $|f'(u_{i+1})-f'(u_i)|\le C$. Define
$\varDelta x_i=x_{i+1}-x_i$. After each particle management, the next time increment
(as defined in Sect.~\ref{subsec:particle_method}) is at least
$\varDelta t_{\text s}\ge \frac{\min_i{\varDelta x_i}}{C}$. If we do not insert
particles, then in each merge one particle is removed. Hence, a time evolution beyond
any given time is possible, since the increments $\varDelta t_{\text s}$ will increase
eventually. When a particle is inserted (whenever two points are at a distance more
than $d_{\max}$), the created distances are at least $\frac{d_{\max}}{2}$,
preserving a lower bound on the following time increment.
\end{proof}
\begin{rem}[Choice of distance parameters]
\label{rem:distance_parameters}
If possible, the minimum particle distance should be chosen $d_\text{min} = 0$.
However, a small positive value of $d_\text{min}$ also leads to a
working method. 
This generalization is of interest if the characteristic equations
have to be solved numerically, such as in Sect.~\ref{sec:sources}.

The maximum particle distance $d_\text{max}$ gives the minimal local resolution of the
method. If the initial data is sampled with a resolution of $h$, then a good choice for
the maximum distance is $h<d_\text{max}<2h$. Here, the upper bound comes from the fact
that after an insertion the local particle distance is halved. Throughout our numerical
simulations we use $d_{\text{max}}=\frac43h$ since this gives, on average, a distance
of $h$ between particles in a rarefaction. This is an estimate that results from solving
$\tfrac12\prn{d_\text{max}+\tfrac12 d_\text{max}}=h$ for $d_\text{max}$.  
\end{rem}
\begin{figure}
\hspace{-3cm}
\begin{minipage}[t]{.65\textwidth}
\centering
\includegraphics[width=0.75\textwidth]{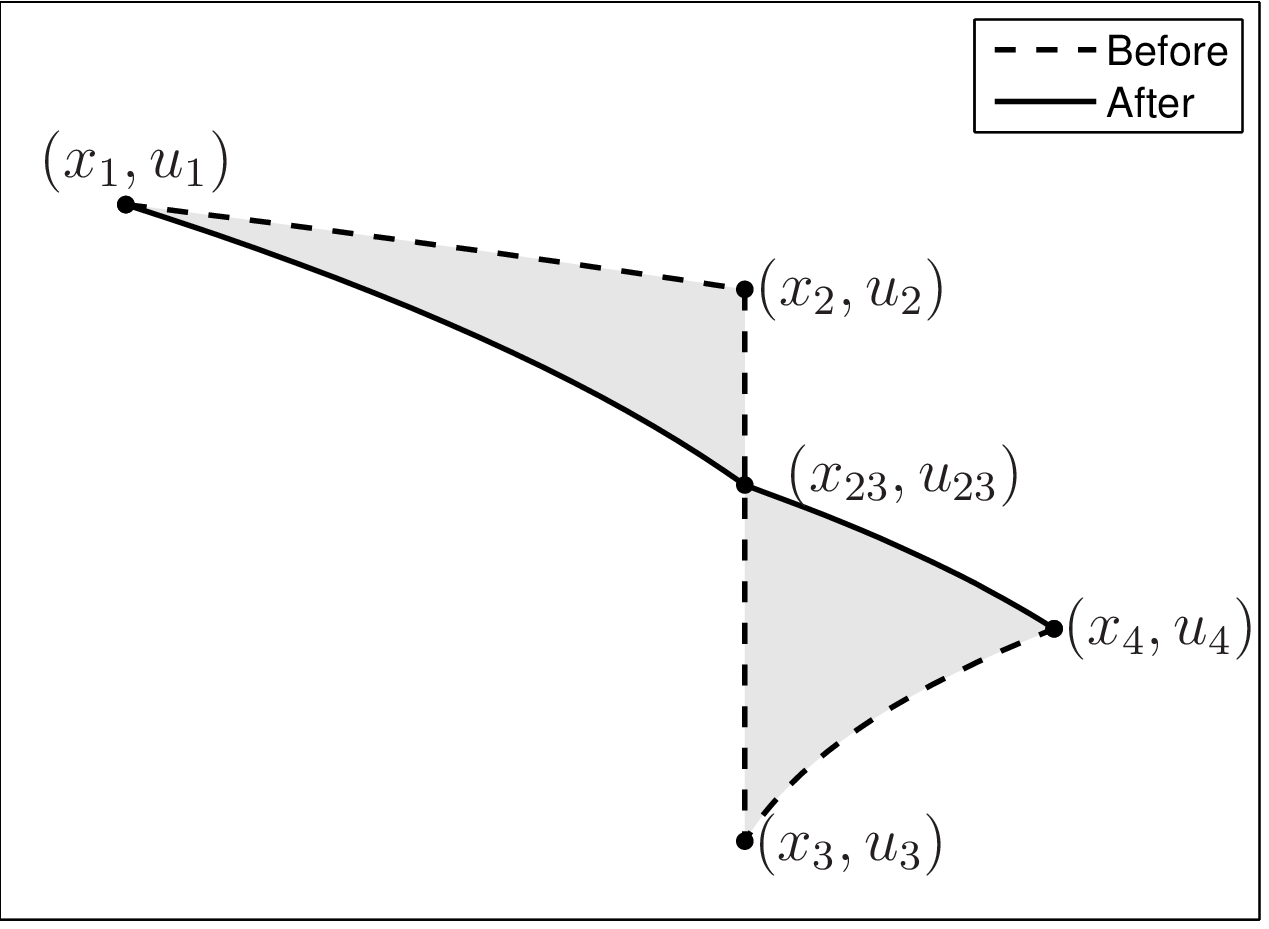}
\caption{Merging two particles yields a new particle with function value chosen
``conservatively''. This implies that the area of the two ``triangles'' is the same.}
\label{fig:merging}
\end{minipage}
\hspace{-2cm}
\begin{minipage}[t]{.65\textwidth}
\centering
\includegraphics[width=0.75\textwidth]{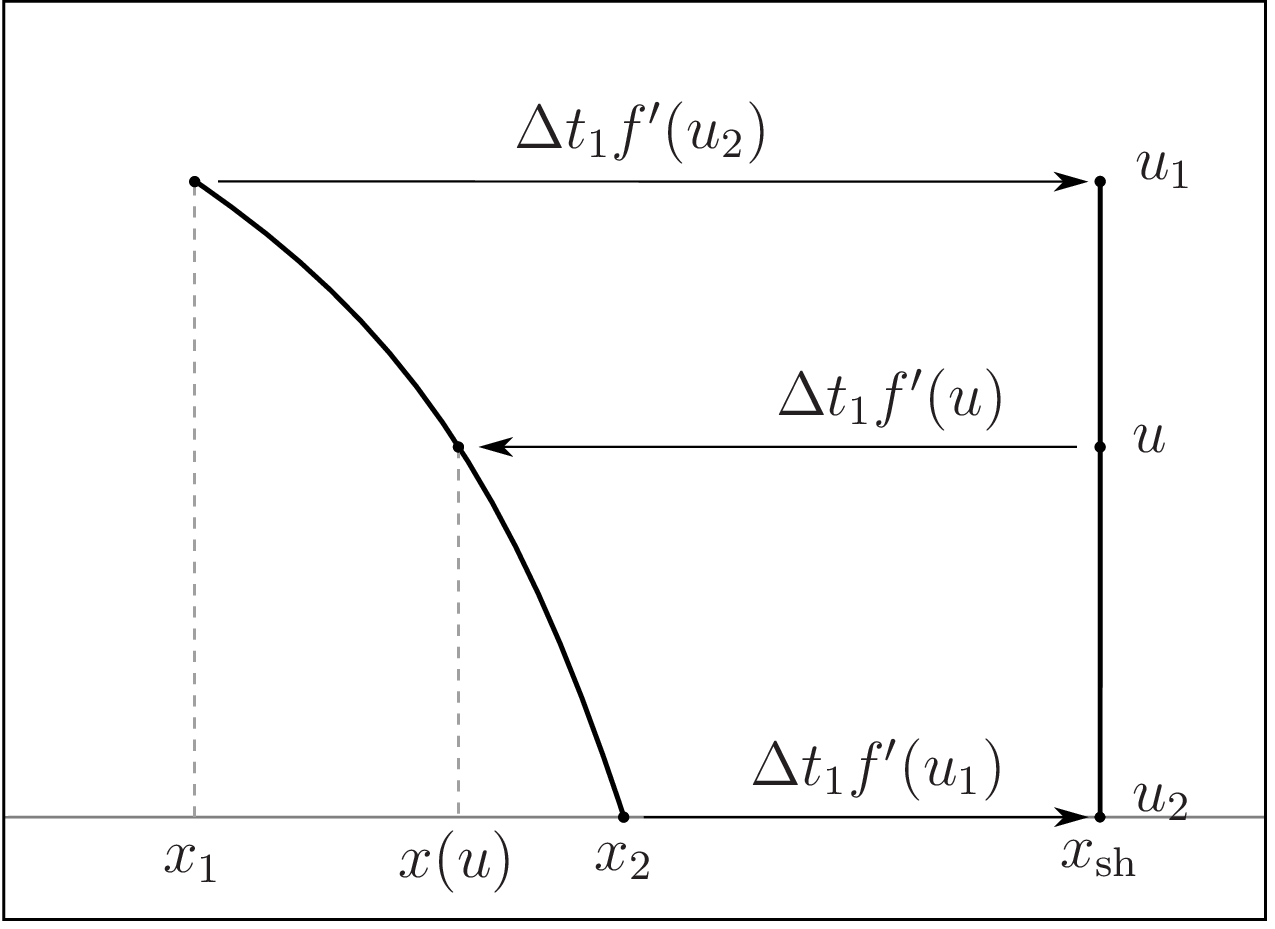}
\caption{To find the $x-$value of a particle with given $u-$value, one locates the
shock and then travels to the current time with velocity $f'(u)$ as given by
\eqref{eq:interpolation_function}.}
\label{fig:def_interp}
\end{minipage}
\hspace{-3cm}
\end{figure}

\subsection{Conservative Interpolation}
\label{subsec:interpolation}
Expression \eqref{eq:area} defines an area between any two points. We show that this
defines an interpolating function $v(x)$ between the two points. While an interpolation
is not required for the particle management itself, it is useful for plotting the
numerical solution, interpreting its properties, and including source terms.
As derived in Sect.~\ref{subsec:conservation_law_area_similarity}, the area
between two points $(x_1,u_1)$ and $(x_2,u_2)$ equals
\begin{equation*}
\int_{x_1}^{x_2}v(x)\ud{x} = (x_2-x_1)a(u_1,u_2)\;,
\end{equation*}
assuming that $f$ is strictly convex or concave in $[u_1,u_2]$.
We define the interpolation by the principle that any point $(x,v)$ on the function
$v = v(x)$ must yield the same area when the interval is split:
\begin{equation}
(x-x_1)a(u_1,v)+(x_2-x)a(v,u_2) = (x_2-x_1)a(u_1,u_2)\;.
\label{eq:area_cond_interpolant}
\end{equation}
If $u_1 = u_2$, the interpolant is a constant function. Otherwise,
\eqref{eq:area_cond_interpolant} can be rearranged to yield
\begin{equation}
\frac{x-x_1}{x_2-x_1}
= \frac{a(u_1,u_2)-a(u_2,v)}{a(u_1,v)-a(u_2,v)}
= \frac{f'(v)-f'(u_1)}{f'(u_2)-f'(u_1)}\;.
\label{eq:interpolation_function_area}
\end{equation}
The last equality in \eqref{eq:interpolation_function_area} follows from
\begin{lem}
For any $u_1,u_2,u_3$ with $u_1\neq u_2$, the nonlinear average satisfies
\begin{equation*}
\frac{a(u_1,u_2)-a(u_2,u_3)}{a(u_1,u_3)-a(u_2,u_3)}
= \frac{f'(u_3)-f'(u_1)}{f'(u_2)-f'(u_1)}\;.
\end{equation*}
\end{lem}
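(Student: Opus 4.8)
The plan is to turn this into a pure algebraic identity by writing the nonlinear average in terms of the Legendre transform $F = f'u - f$ used in Section~\ref{sec:conservation_law_area}. From \eqref{eq:nonlinear_average},
\[
a(u_i,u_j) = \frac{F(u_j)-F(u_i)}{f'(u_j)-f'(u_i)},
\]
so each average is an increment of $F$ divided by the corresponding increment of $f'$.

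First I would change variables to $g=f'(u)$. Since $f$ is strictly convex (or concave) on the relevant range, $f'$ is strictly monotone and hence invertible, so $F$ may be viewed as a function $\phi$ of $g$, with $\phi(g_i)=F(u_i)$ where $g_i=f'(u_i)$. In these variables
\[
a(u_i,u_j) = \frac{\phi(g_j)-\phi(g_i)}{g_j-g_i}
\]
is exactly the slope of the secant of $\phi$ through $g_i$ and $g_j$, i.e.\ the first divided difference $[\phi;g_i,g_j]$. The claim then becomes the purely numerical identity
\[
\frac{[\phi;g_1,g_2]-[\phi;g_2,g_3]}{[\phi;g_1,g_3]-[\phi;g_2,g_3]} = \frac{g_3-g_1}{g_2-g_1}.
\]

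The key step is to observe that both differences on the left are multiples of the single second divided difference $[\phi;g_1,g_2,g_3]$. From the standard recursion one has $[\phi;g_1,g_2]-[\phi;g_2,g_3]=-(g_3-g_1)[\phi;g_1,g_2,g_3]$ and $[\phi;g_1,g_3]-[\phi;g_2,g_3]=-(g_2-g_1)[\phi;g_1,g_2,g_3]$, so the ratio collapses to $\frac{g_3-g_1}{g_2-g_1}$ once the common factor is cancelled. Equivalently, and avoiding any divided-difference machinery, one may simply clear the denominators $g_j-g_i$ and verify that the resulting bilinear identity in the $\phi_i$ and $g_i$ holds term by term; this expansion is mechanical and I would not expect it to hide any difficulty.

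The one point that needs care is the legitimacy of the cancellation, which requires the nodes $g_i$ to be distinct and $[\phi;g_1,g_2,g_3]\neq0$. The hypothesis $u_1\neq u_2$ secures $g_1\neq g_2$, but the coincidences $u_2=u_3$ or $u_1=u_3$ (at which some averages degenerate to $a(u,u)=u$) fall outside the generic argument and must be handled separately. Here I expect the cleanest route to be continuity: by Lemma~\ref{thm:average_properties}(5) both sides are continuous in $(u_1,u_2,u_3)$ on the region $u_1\neq u_2$, so verifying the identity for distinct values extends it to the coincident cases by a limiting argument. This degeneracy is the only real obstacle; the algebra itself is routine.
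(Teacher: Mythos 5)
Your proof is correct, and at bottom it rests on the same identity as the paper's: $a(u_i,u_j)\,\brk{f'(u)}_{u_i}^{u_j}=\brk{F}_{u_i}^{u_j}$ with $F=f'u-f$. The paper sums this relation over the cycle $(u_1,u_2),(u_2,u_3),(u_3,u_1)$, so the right-hand sides telescope to zero, and then rearranges; your reparametrization $g=f'(u)$, under which $a$ becomes the first divided difference of $\phi=F\circ(f')^{-1}$, turns that same telescoping into the standard divided-difference recursion — the algebra is the same in different clothing. The genuine difference lies in the degenerate cases. Each term of the paper's cyclic identity is valid even when $u_i=u_j$ (both sides vanish), and the only divisions performed at the end are by $\brk{f'(u)}_{u_1}^{u_2}$ and by $a(u_1,u_3)-a(u_2,u_3)$, both nonzero under $u_1\neq u_2$ and strict convexity; hence the coincidences $u_2=u_3$ and $u_1=u_3$ need no separate treatment and no limit. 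Your version, as written, wants three distinct nodes and then a continuity patch — which works, but is avoidable (confluent divided differences, using $a(u,u)=u=\phi'(g)$, close the gap directly). Finally, your worry about $[\phi;g_1,g_2,g_3]=0$ can be dismissed outright: since $\phi'(g)=(f')^{-1}(g)$ is strictly monotone, $\phi$ is strictly convex or concave and its second divided difference never vanishes; equivalently, $[\phi;g_1,g_2,g_3]=\bigl(a(u_1,u_3)-a(u_2,u_3)\bigr)/(g_1-g_2)$, which is nonzero precisely when the left-hand side of the lemma is well defined.
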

\begin{proof}
By definition of the average function, we have
\begin{align*}
0 &= a(u_1,u_2)\brk{f'(u)}_{u_1}^{u_2}+a(u_2,u_3)\brk{f'(u)}_{u_2}^{u_3}
+a(u_3,u_1)\brk{f'(u)}_{u_3}^{u_1} \\
&= a(u_1,u_2)\brk{f'(u)}_{u_1}^{u_2}+a(u_2,u_3)
\prn{\brk{f'(u)}_{u_1}^{u_3}-\brk{f'(u)}_{u_1}^{u_2}}
-a(u_1,u_3)\brk{f'(u)}_{u_1}^{u_3} \\
&= \prn{a(u_1,u_2)-a(u_2,u_3)}\brk{f'(u)}_{u_1}^{u_2}
-\prn{a(u_1,u_3)-a(u_2,u_3)}\brk{f'(u)}_{u_1}^{u_3}\;.
\end{align*}
Rearranging the terms proves the claim.
\end{proof}
Observe that condition \eqref{eq:area_cond_interpolant} is identical to the condition for
particle insertion \eqref{eq:area_cond_insert}, which means that any newly inserted
particle must be placed on the interpolant.
Relation \eqref{eq:interpolation_function_area} defines the interpolant as a
function $x(v)$. This is in fact an advantage, since at a discontinuity $x_1 = x_2$,
characteristics for all intermediate values $v$ are defined. Thus, rarefaction fans arise
naturally if $f'(u_1)<f'(u_2)$. If $f$ has no inflection points between $u_1$ and $u_2$,
the inverse function $v(x)$ exists. For plotting purposes we plot $x(v)$ instead of
inverting the function.

The interpolation \eqref{eq:interpolation_function_area} can also be derived as a
similarity solution of the
conservation law \eqref{eq:characteristic_equations_space_indep}, as follows.
If $u_1=u_2$, we define $v(x)=u_1$. Otherwise, one has $f'(u_1)\neq f'(u_2)$.
As derived in Sect.~\ref{sec:conservation_law_area}, the solution either came from a
discontinuity (i.e.~it is a rarefaction wave) or it will become a shock (i.e.~it is a
compression wave).
The time
$\varDelta t_1$ until this discontinuity happens is given by
\eqref{eq:intersection_time}. At time $t+\varDelta t_1$ the particles have the
same position $x_1=x_2=x_{\text{sh}}$, as shown in Fig.~\ref{fig:def_interp}.
At this time the interpolation must be a straight line connecting the two particles,
representing a discontinuity at $x_{\text{sh}}$. We require any particle of the
interpolating function $(x,v(x))$ to move with its characteristic velocity $f'(v(x))$ in
the time between $t$ and $t+\varDelta t_1$. 
This defines the interpolation uniquely as
\begin{equation}
x(v) = x_1-\varDelta t_1\prn{f'(v)-f'(u_1)}
= x_1+\frac{f'(v)-f'(u_1)}{f'(u_2)-f'(u_1)}(x_2-x_1) \;,
\label{eq:interpolation_function}
\end{equation}
which equals expression \eqref{eq:interpolation_function_area}.

Not only is this interpolation compatible with the evolution of area under a conservation
law, it also is a solution:
\begin{lem}
\label{lem:interpolation:is:solution}
Together with the characteristic motion of the nodes, interpolation
\eqref{eq:interpolation_function} is a solution of the conservation
law \eqref{eq:conservation_law}. 
\end{lem}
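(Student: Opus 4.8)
The plan is to recognize the interpolation \eqref{eq:interpolation_function} as a self-similar solution assembled from the characteristics that emanate from the single space-time point at which the two nodes coincide. Write $t_0 = t + \varDelta t_1$ for the collision time given by \eqref{eq:intersection_time}, and $x_{\text{sh}} = x_1 + f'(u_1)\,\varDelta t_1$ for the common location of the nodes at that instant. First I would rewrite \eqref{eq:interpolation_function} in the manifestly characteristic form
\begin{equation*}
x(v,\tau) = x_{\text{sh}} + f'(v)\,\prn{\tau - t_0}\;,
\end{equation*}
which one checks reduces to \eqref{eq:interpolation_function} at $\tau = t$. For each fixed $v$ this is the characteristic through $(x_{\text{sh}},t_0)$ carrying the constant value $v$, and specializing to $v = u_1$ and $v = u_2$ recovers the two node trajectories $x_i + f'(u_i)(\tau - t)$ with $\dot u_i = 0$. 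Thus the ``characteristic motion of the nodes'' is built in, and it remains to show that the resulting field solves the conservation law between the nodes.

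Next I would invert the relation to obtain $u$ as a function of $(x,\tau)$. Because $f$ is strictly convex or concave on $[\min_i u_i,\max_i u_i]$, the map $v \mapsto f'(v)$ is strictly monotone there; hence for each fixed $\tau \neq t_0$ the map $v \mapsto x(v,\tau)$ is strictly monotone and therefore invertible onto the interval spanned by the two nodes. This defines a single-valued, continuously differentiable function $u(x,\tau)$ through the implicit equation
\begin{equation*}
f'\!\prn{u} = \frac{x - x_{\text{sh}}}{\tau - t_0}\;.
\end{equation*}

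Finally I would verify the equation by implicit differentiation. Differentiating the implicit relation with respect to $x$ and to $\tau$ gives
\begin{equation*}
u_x = \frac{1}{f''(u)\,\prn{\tau - t_0}}\;, \qquad
u_\tau = -\frac{f'(u)}{f''(u)\,\prn{\tau - t_0}}\;,
\end{equation*}
both well defined since $f'' \neq 0$ on the range in question. Substituting into $u_\tau + f(u)_x = u_\tau + f'(u)\,u_x$ gives zero identically, so $u(x,\tau)$ solves \eqref{eq:conservation_law} (here with $f = f(u)$) wherever it is smooth.

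The differentiation is routine; the step that carries the real content is the invertibility and single-valuedness claim, and this is where strict convexity or concavity is essential. It is precisely the statement that the characteristics issuing from the single point $(x_{\text{sh}},t_0)$ do not cross for $\tau \neq t_0$, so that $u(\cdot,\tau)$ is a genuine function rather than a multivalued profile. I would also remark that as $\tau \to t_0$ the fan collapses to a single jump at $x_{\text{sh}}$, so that the smooth interpolation connects continuously to the limiting discontinuity, consistent with the discussion preceding \eqref{eq:interpolation_function}.
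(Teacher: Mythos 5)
Your proof is correct and takes essentially the same route as the paper's: both identify the interpolant as the fan of characteristics centered at the focal point $(x_{\text{sh}},t_0)$ and check that the level curve of each value $v$ travels with speed $f'(v)$ (the paper does this by computing $\pd{x}{t}(v,t)=f'(v)$ directly from $\dot x_i=f'(u_i)$ and then invoking the characteristic equations). You go one small step further by inverting to Eulerian variables and verifying $u_\tau+f'(u)\,u_x=0$ via implicit differentiation, which makes explicit the single-valuedness issue (requiring $f''\neq 0$ on the relevant range) that the paper's Lagrangian computation leaves implicit.
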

\begin{proof}
Using that \mbox{$\dot x_i(t)=f'(u_i)$} for~$i=1,2$ one obtains
\begin{align*}
\pd{x}{t}(v,t)&= \dot x_1
+\frac{f'(v)-f'(u_1)}{f'(u_2)-f'(u_1)}(\dot x_2-\dot x_1)\\
&=f'(u_1)+\frac{f'(v)-f'(u_1)}{f'(u_2)-f'(u_1)}(f'(u_2)-f'(u_1))
=f'(v) \;.
\end{align*}
Thus every point on the interpolation $v(x,t)$ satisfies the characteristic
equation \eqref{eq:characteristic_equations}.
\end{proof}
\begin{cor}[exact solution property]
\label{cor:exact_solution}
Consider characteristic particles with $x_1(t)<x_2(t)<\dots<x_n(t)$. 
At any time consider the function defined by the interpolation
\eqref{eq:interpolation_function}. This function is a classical
(i.e.~continuous) solution to the conservation law \eqref{eq:conservation_law}.
In particular, it satisfies the conservation properties given in
Sect.~\ref{sec:conservation_law_area}.
\end{cor}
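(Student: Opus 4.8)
The plan is to view the globally interpolated function $v(x,t)$ as the concatenation of the pairwise interpolants \eqref{eq:interpolation_function} over the consecutive intervals $[x_i(t),x_{i+1}(t)]$, and to reduce the statement to Lemma~\ref{lem:interpolation:is:solution} applied on each interval together with a matching (continuity) check at the nodes. First I would confirm that the construction yields a single-valued continuous function. On $[x_i,x_{i+1}]$ the interpolant attains the boundary values $u_i$ at $x_i$ and $u_{i+1}$ at $x_{i+1}$; since the nodes are strictly ordered and $f$ has no inflection points on the relevant range, relation \eqref{eq:interpolation_function} is strictly monotone in $v$ and hence defines $v(\cdot,t)$ unambiguously on each closed interval (a constant $v\equiv u_i$ if $u_i=u_{i+1}$). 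Two adjacent intervals assign the common value $u_i$ to the shared node $x_i$, so the pieces agree there and $v(\cdot,t)$ is continuous across the whole range; continuity in $t$ follows from the continuous characteristic motion $\dot x_i=f'(u_i)$ of the nodes.

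Next I would establish the PDE in the interior of each interval. Lemma~\ref{lem:interpolation:is:solution} already shows that every point $(x,v(x,t))$ of an interpolant moves with speed $\partial_t x(v,t)=f'(v)$ while carrying a constant value $v$; this is precisely $v_t+f'(v)\,v_x=0$, i.e.~$v_t+f(v)_x=0$, on each open interval $(x_i,x_{i+1})$. Hence $v$ is a classical solution away from the nodes.

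The only delicate point is the behavior at the nodes $x_i$, where $v$ is merely continuous and may have a corner in $x$. Here I would argue that the kink introduces no defect in the weak formulation: testing $v_t+f(v)_x=0$ against a smooth compactly supported $\varphi$ and integrating by parts interval by interval produces interface contributions along each curve $x_i(t)$, and these cancel because $v$---and therefore $f(v)$---is continuous across $x_i$. Equivalently, the Rankine--Hugoniot balance across each node is trivial, the left and right states being equal. Thus $v$ is a weak, and by continuity a classical (continuous), solution of \eqref{eq:conservation_law}. The conservation properties of Sect.~\ref{sec:conservation_law_area} then follow at once, since the interpolation was built so that the area between points obeys \eqref{eq:area}, whence \eqref{eq:cons_law_area_Lagrangian} holds for the genuine solution $v$. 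I expect the node analysis to be the main obstacle: one must check that continuity alone, without $C^1$ regularity, suffices to upgrade a piecewise classical solution to a global one.
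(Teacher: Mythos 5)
Your proposal is correct and follows essentially the route the paper intends: the corollary is stated without a separate proof precisely because it is the piecewise application of Lemma~\ref{lem:interpolation:is:solution} on each interval $[x_i,x_{i+1}]$, glued by the shared nodal values $u_i$. Your extra care at the nodes (the weak-formulation/Rankine--Hugoniot check with equal left and right states) is a sound elaboration of a step the paper leaves implicit, and is consistent with the paper's usage of ``classical'' to mean continuous rather than $C^1$.
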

\noindent This corollary breaks down when shocks occur.
\begin{thm}[TVD]
With the assumptions of Thm.~\ref{thm:merging_TVD}, the particle method is total
variation diminishing, thus it does not create spurious oscillations.
\end{thm}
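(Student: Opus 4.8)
The plan is to show that the total variation of the numerical solution never increases under any operation the method performs; since the scheme consists only of these operations, the TVD property follows. First I would record that, because the interpolant \eqref{eq:interpolation_function} is monotone between any two adjacent particles (the flux has no inflection point on the relevant range, so $f'$ is strictly monotone and $x(v)$ is monotone from $x_1$ to $x_2$), the total variation of the interpolated function equals the discrete quantity $\mathrm{TV}=\sum_i\abs{u_{i+1}-u_i}$. It therefore suffices to track this sum through each step.

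There are three operations to examine. During pure characteristic motion the function values $u_i$ are constant (the flux is space-independent, so $\dot u_i=0$), and the time increment $\varDelta t_{\text s}$ of \eqref{eq:time_step} is chosen precisely so that no two particles cross before a collision. Hence the ordering of the particles is preserved and $\mathrm{TV}$ is unchanged. For an insertion, Lemma~\ref{lem:existence_u23_insert} guarantees $u_{23}\in[u_2,u_3]$; replacing the link $u_2\to u_3$ by $u_2\to u_{23}\to u_3$ contributes $\abs{u_2-u_{23}}+\abs{u_{23}-u_3}=\abs{u_2-u_3}$, so $\mathrm{TV}$ is again unchanged.

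The only step that can strictly decrease $\mathrm{TV}$ is a merge, and this is where Thm.~\ref{thm:merging_TVD} does the essential work: under its hypothesis the merged value satisfies $u_{23}\in[u_2,u_3]$. Replacing the chain $u_1\to u_2\to u_3\to u_4$ by $u_1\to u_{23}\to u_4$ changes the local contribution from $\abs{u_1-u_2}+\abs{u_2-u_3}+\abs{u_3-u_4}$ to $\abs{u_1-u_{23}}+\abs{u_{23}-u_4}$. By the triangle inequality,
\begin{align*}
\abs{u_1-u_{23}}+\abs{u_{23}-u_4}
&\le \abs{u_1-u_2}+\abs{u_2-u_{23}}+\abs{u_{23}-u_3}+\abs{u_3-u_4}\\
&= \abs{u_1-u_2}+\abs{u_2-u_3}+\abs{u_3-u_4}\;,
\end{align*}
where the final equality uses $u_{23}\in[u_2,u_3]$ to collapse $\abs{u_2-u_{23}}+\abs{u_{23}-u_3}$ into $\abs{u_2-u_3}$. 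Thus a merge cannot increase $\mathrm{TV}$.

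Since each operation leaves $\mathrm{TV}$ unchanged or decreases it, the method is total variation diminishing. The main obstacle is concentrated entirely in the containment $u_{23}\in[u_2,u_3]$ for the merge: were the merged value allowed to leave $[u_2,u_3]$, it could introduce a new extremum and raise the variation, so Thm.~\ref{thm:merging_TVD} is exactly the ingredient that makes the argument go through. Everything else reduces to the bookkeeping with the triangle inequality carried out above.
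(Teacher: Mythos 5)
Your proposal is correct and follows essentially the same route as the paper: decompose the method into characteristic motion, insertion, and merging, observe that the first two leave the total variation unchanged, and use the containment $u_{23}\in[u_2,u_3]$ from Thm.~\ref{thm:merging_TVD} to handle the merge. You simply spell out in more detail (via the monotonicity of the interpolant and the triangle inequality) what the paper compresses into a few sentences.
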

\begin{proof}
Due to Cor.~\ref{cor:exact_solution}, the characteristic movement yields an exact
classical solution, thus the total variation is constant. Particle insertion simply
refines the interpolation, thus preserves the total variation.
Due to Thm.~\ref{thm:merging_TVD}, merging yields a new particle with a function
value $u_{23}$ between the values of the removed particles. Thus, the total variation
is the same as before the merge or smaller.
\end{proof}
\begin{rem}
The particle method approximates the solution locally by similarity solutions, very
similar to \emph{front tracking} by Holden et al.~\cite{HoldenHoldenHeghKrohn1988}.
Front tracking uses shocks (after approximating the flux function by a piecewise linear,
and the solution by a piecewise constant function). In comparison, our method uses wave
solutions, i.e.~rarefactions and compressions.
\end{rem}

\subsection{Computational Aspects}
\begin{rem}[Shock location]
\label{rem:shock_location}
The particle method does not track shocks. Still, shocks can be located. Whenever
particles are merged, the new particle can be marked as a \emph{shock particle}. Thus,
any shock stretches over three particles $(x_1,u_1),(x_2,u_2),(x_3,u_3)$, with the shock
particle in the middle. Before plotting or interpreting the solution, a postprocessing
step can be performed: The shock particle is replaced by a discontinuity, represented
by two particles $(\bar{x}_2,u_1),(\bar{x}_2,u_3)$, with their position $\bar{x}_2$
chosen, such that area is conserved exactly. This step is harmless, since an immediate
particle merge would recover the original configuration. As Fig.~\ref{fig:shock_location}
illustrates, this postprocessing locates the shock with second order accuracy.
The numerical results in Sect.~\ref{subsec:numerics_convergence_error} second this.
\end{rem}

\begin{figure}
\centering
\begin{minipage}[t]{.9\textwidth}
\centering
\includegraphics[width=0.5\textwidth]{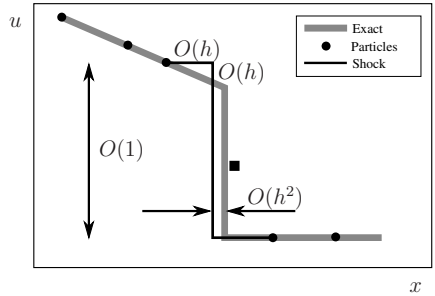}
\caption{The location of the reconstructed shock is second order
accurate in $h$. The square particle is a ``shock particle''}
\label{fig:shock_location}
\end{minipage}
\end{figure}

\begin{rem}[Shock Speed]
\label{rem:shock_speed}
Since the particle method locates shocks with second order accuracy
(Lemma~\ref{rem:shock_location}), the average shock speed is recovered with the same
accuracy. However, the instantaneous speed of a reconstructed shock is only a first
order accurate approximation to the true shock speed. The reason is that the shock
position is chosen according to a locally constant solution. This yields an $O(h)$
error in the local shock height, and thus an $O(h)$ error in the shock speed,
according to the Rankine-Hugoniot condition \cite{Evans1998}. In addition, at each
particle management, the position of the reconstructed shock jumps a distance of order $O(h^2)$.
Note that Riemann problems are solved \emph{exactly} by our method.
\end{rem}

\begin{rem}[Quadratic flux function]
\label{rem:quadratic_flux}
The method is particularly efficient for quadratic flux functions.
In this case the interpolation \eqref{eq:interpolation_function} between two
points is a straight line, and the average \eqref{eq:nonlinear_average} is the
arithmetic mean $a(u_1,u_2) = \frac{u_1+u_2}{2}$. Thus, the function values for
particle management can be computed explicitly.
\end{rem}
\begin{rem}[Computational Cost]
An interesting aspect arises in the computational cost of the method, when counting
evaluations of the flux function $f$ and its derivatives $f'$, $f''$. Particle movement
does not involve any evaluations, since the characteristic velocity of each particle
$f'(u_i)$ does not change. Consider a solution on $t\in [0,1]$ with a bounded number
of shocks, to be approximated by $O(n)$ particles. Every particle merge and insertion
requires $O(1)$ evaluations. After each time increment \eqref{eq:time_step}, $O(1)$
management steps are required. The total number of time increment steps is $O(n)$.
Thus, the total cost is $O(n)$ evaluations, as opposed to $O(n^2)$ evaluations for
Godunov methods. Note that this aspect is only apparent if evaluations of
$f'$, $f''$ are expensive, since the total number of operations is still $O(n^2)$.
\end{rem}

\section{Sampling of the Initial Data}
\label{sec:sampling_initial_data}
When no source terms are present, the method has two sources of error: the initial sampling,
and the merging of particles which contributes to the error in the neighborhood of shocks.
Since, after the initial sampling and away from shocks, the solution is evolved exactly,
it is natural to look for ways to reduce the error due to the initial sampling. 
In some applications, the initial function $u_0$ may be representable exactly by the
interpolation \eqref{eq:interpolation_function}. In other cases, it has to be
approximated. In Sect.~\ref{subsec:sampling_initial_data_convergence}, we show how
well $u_0$ can be approximated, as more and more particles are used.
In Sect.~\ref{subsec:sampling_initial_data_adaptivity}, we outline a strategy of
initializing a given number of particles in order to obtain a good approximation.

\subsection{Error Convergence}
\label{subsec:sampling_initial_data_convergence}
\begin{lem}
\label{lem:sampling_local}
Consider a smooth function $w(x)$ on an interval $x\in\brk{-\frac{h}{2},\frac{h}{2}}$,
with $|f''(\bar{w})|\ge C>0\;\forall \bar{w}\in\brk{w(-\frac{h}{2}),w(\frac{h}{2})}$.
Let $v(x)$ denote the interpolant \eqref{eq:interpolation_function} between
$-\frac{h}{2}$ and $\frac{h}{2}$. Then $|v(x)-w(x)| = O(h^2)$, and
$\int_{-h/2}^{h/2}|v(x)-w(x)|\ud{x} = O(h^3)$ i.e.~the approximation is second-order
accurate in both $L^\infty$ and $L^1$ norms\footnote{The power 3 in the order of the
integral is needed so that the global $L^1$ error, which results from adding up the
errors from all the intervals, is second-order.}.
\end{lem}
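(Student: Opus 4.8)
The plan is to exploit the defining feature of the interpolant \eqref{eq:interpolation_function}: it is precisely the function whose image under $f'$ is \emph{linear} in $x$. Writing \eqref{eq:interpolation_function} with $x_1=-h/2$, $x_2=h/2$, $u_1=w(-h/2)$, $u_2=w(h/2)$, one reads off
\begin{equation*}
f'(v(x)) = f'(u_1) + \frac{x+h/2}{h}\prn{f'(u_2)-f'(u_1)} =: L(x),
\end{equation*}
so that $L$ is exactly the linear interpolant, at the endpoints $\pm h/2$, of the smooth function $g(x) := f'(w(x))$. The estimate thus reduces to a classical linear interpolation bound for $g$, transported back to the $u$-variable through the inverse of $f'$.

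First I would record that the hypothesis $|f''|\ge C>0$, together with continuity of $f''$, forces $f''$ to keep a constant sign on the range of values taken by $w$ (which, for $h$ small, sits in a fixed neighborhood of $[w(-h/2),w(h/2)]$ by continuity). Hence $f'$ is a strictly monotone $C^2$ bijection there, with inverse $\phi := (f')^{-1}$ satisfying $|\phi'| = 1/|f''\circ\phi| \le 1/C$. Consequently $v(x) = \phi(L(x))$ and $w(x) = \phi(g(x))$, and the whole error is governed by $|L-g|$ up to the factor $1/C$.

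Next I would invoke the pointwise error formula for linear interpolation: for each $x$ there is $\xi$ with $g(x)-L(x) = \tfrac12(x+h/2)(x-h/2)g''(\xi)$, whence $|g(x)-L(x)|\le \tfrac{h^2}{8}\max|g''|$. A mean-value step then pushes this through $\phi$, giving
\begin{equation*}
|v(x)-w(x)| = |\phi(L(x))-\phi(g(x))| \le \tfrac1C|L(x)-g(x)| \le \tfrac{h^2}{8C}\max|g''|,
\end{equation*}
which is the claimed $O(h^2)$ bound in $L^\infty$. For the $L^1$ statement I would integrate the pointwise formula directly, using $\int_{-h/2}^{h/2}|(x+h/2)(x-h/2)|\ud x = h^3/6$, to obtain $\int_{-h/2}^{h/2}|v-w|\ud x \le \tfrac{h^3}{12C}\max|g''| = O(h^3)$.

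The main obstacle is not any single step but confirming that all constants are genuinely independent of $h$ as $h\to 0$. Since $g=f'\circ w$, the chain rule gives $g'' = f'''(w)(w')^2 + f''(w)w''$, and I would argue that $\max|g''|$ stays bounded uniformly in $h$ by appealing to the smoothness of $w$ and of $f$ on the fixed range of values $w$ attains — the sup-norms of $w',w'',f'',f'''$ there do not shrink with the interval. Coupling this with the uniform lower bound $C$ on $|f''|$ (and noting that, for $h$ small, the range of $w$ stays in the region where this bound holds) closes the argument: the hidden constants in both $O(h^2)$ and $O(h^3)$ depend only on $C$ and on bounds for the derivatives of $w$ and $f$, and not on $h$.
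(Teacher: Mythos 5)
Your proof is correct, and it reaches the result by a cleaner route than the paper's, although both arguments ultimately rest on the same two facts: the interpolant is exact up to linear-interpolation error in the variable $f'(u)$, and the lower bound $|f''|\ge C$ lets one transfer that error back to $u$. The paper Taylor-expands $w$ about $0$, substitutes into $f'$ and into \eqref{eq:interpolation_function}, and compares the two expansions term by term to isolate the leading error $\tfrac12\prn{f''(w_0)w_0''+f'''(w_0)w_0'^2}\prn{x^2-\tfrac14h^2}/f''(\bar w)$; you instead observe that $f'(v(x))$ \emph{is} the linear interpolant $L$ of $g=f'\circ w$ at the endpoints, invoke the textbook remainder $g-L=\tfrac12(x+h/2)(x-h/2)\,g''(\xi)$, and compose with $\phi=(f')^{-1}$, whose Lipschitz constant is $1/C$. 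Your version buys explicit, non-asymptotic constants and avoids redoing the Taylor algebra; what it does not deliver (and the paper's computation does) is the exact leading coefficient $\tfrac1{12}\prn{f''w''+f'''w'^2}/f''$ of the local $L^1$ error, which the paper records as \eqref{eq:leading_order_l1_error} and reuses as the error density $e(x)$ driving the adaptive sampling of Sect.~\ref{subsec:sampling_initial_data_adaptivity}. Since that coefficient is not part of the lemma's statement, your argument is a complete proof of the lemma as stated; to recover the sharp constant you would only need to add that $g''(\xi)\to g''(0)$ and $f''(\bar w)\to f''(w_0)$ as $h\to 0$. One shared, harmless imprecision: both arguments need the bound $|f''|\ge C$ on the full range of $w$ and on the intermediate values between $v(x)$ and $w(x)$, which may slightly exceed $\brk{w(-\tfrac h2),w(\tfrac h2)}$; for smooth $w$ and $h\to 0$ this is absorbed by continuity, as you note.
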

\begin{proof}
Substituting a Taylor expansion
$w(x) = w_0+w_0'x+\tfrac{1}{2}w_0''x^2+O(x^3)$
into $f'$ yields
\begin{equation}
f'(w(x)) = f'(w_0)+f''(w_0)w_0'x
+\tfrac{1}{2}\prn{f''(w_0)w_0''+f'''(w_0)w_0'^2} x^2+O(x^3)\;.
\label{eq:taylor_f_u}
\end{equation}
Using \eqref{eq:taylor_f_u} in \eqref{eq:interpolation_function}
yields for the interpolation $v(x)$
\begin{align}
f'(v(x))
=& f'(w(-\tfrac{h}{2}))
+\prn{f'(w(\tfrac{h}{2}))-f'(w(-\tfrac{h}{2}))}
\frac{x+\tfrac{h}{2}}{h} \nonumber \\
=& f'(w_0)-\tfrac{1}{2}f''(w_0)w_0'h
+\tfrac{1}{8}\prn{f''(w_0)w_0''+f'''(w_0)w_0'^2}h^2+O(h^3) \nonumber \\
&+ \prn{f''(w_0)w'_0h+O(h^3)}
\prn{\tfrac{x}{h}+\tfrac{1}{2}} \nonumber \\
=& f'(w_0)+f''(w_0)w'_0x
+\tfrac{1}{8}\prn{f''(w_0)w_0''+f'''(w_0)w_0'^2}h^2+O(h^3)\;.
\label{eq:taylor_f_v}
\end{align}
Comparing \eqref{eq:taylor_f_u} and \eqref{eq:taylor_f_v} yields
\begin{equation}
f'(w(x))-f'(v(x)) = \frac{1}{2}\prn{f''(w_0)w_0''+f'''(w_0)w_0'^2}
\prn{x^2-\frac14h^2}+O(h^3)\;.
\end{equation}
Using the Mean Value Theorem, we obtain the estimate
\begin{equation}
w(x)-v(x) = \frac12\frac{\prn{f''(w_0)w_0''+f'''(w_0)w_0'^2}\prn{x^2-\frac14h^2}}
{f''(\bar w)}+O(h^3)
\end{equation}
for some function value $\bar w$ between $v$ and $w$ (or $w(-\tfrac h2)$ and
$w(\tfrac h2)$). Since the denominator is bounded from below by $C$, we have our result
for the $L^\infty$ norm. The $L^1$ error of the interpolation in the interval satisfies
\begin{equation}
\label{eq:leading_order_l1_error}
  \lim_{h\rightarrow 0}\frac{\int_{-h/2}^{h/2}|w(x)-v(x)|\ud{x}}{h^3}
= \frac1{12}\frac{\prn{f''(w_0)w_0''+f'''(w_0)w_0'^2}}{f''(w_0)}\;.
\end{equation}
Thus completing the proof.
\end{proof}
The formula for the error ``density'' is used for our adaptive
sampling in the next section. We therefore name this density $e$:
\begin{equation}
e(x)=\frac1{12}\frac{\prn{f''(w(x))w''(x)+f'''(w(x))w'^2(x)}}{f''(w(x))}\;.
\end{equation}
Non-smooth functions can be approximated as well, if the discontinuities are known:
\begin{thm}
\label{thm:sampling_global}
Consider a piecewise smooth function $u_0(x)$ with finitely many discontinuities
(at known locations). Assume further that $|f''(u_0(x))|\ge C>0\ \forall x$.
Then $u_0$ can be approximated with second-order accuracy, using the
interpolations \eqref{eq:interpolation_function}.
\end{thm}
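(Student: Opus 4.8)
The plan is to reduce the global statement to the local estimate of Lemma~\ref{lem:sampling_local}, exploiting two features: that the discontinuity locations are known, and that the interpolation \eqref{eq:interpolation_function} represents a jump exactly. First I would partition the domain at the finitely many discontinuity points $x_{d_1}<\dots<x_{d_k}$, obtaining finitely many closed subintervals on each of which $u_0$ is smooth up to the boundary. The hypothesis $|f''(u_0(x))|\ge C>0$ guarantees that $f''$ does not change sign on the range of $u_0$, so $f$ is strictly convex (or concave) there and the nonlinear average $a(\cdot,\cdot)$, hence the interpolation, is well-defined between any pair of sampled values.

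Next, I would place a pair of coincident particles at each discontinuity $x_{d_j}$, carrying the one-sided limits $u_0(x_{d_j}^-)$ and $u_0(x_{d_j}^+)$; these same one-sided values serve as the boundary nodes of the two adjacent smooth pieces. Setting $x_1=x_2=x_{d_j}$ in \eqref{eq:interpolation_function} produces a vertical segment, i.e.~a jump reproducing the discontinuity exactly, so the coincident pair contributes no approximation error. On each smooth piece I would then sample interior nodes with spacing $\sim h$, subdividing it into $O(1/h)$ subintervals of width at most $h$, and invoke Lemma~\ref{lem:sampling_local} on each (after the obvious translation, since the lemma is stated on a centered interval).

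For the aggregation, the $L^\infty$ error is the maximum of the per-subinterval errors, each $O(h^2)$, so the global $L^\infty$ error is $O(h^2)$. The global $L^1$ error is the sum of the per-subinterval $L^1$ errors: each smooth piece has bounded length and thus $O(1/h)$ subintervals, each contributing $O(h^3)$, giving $O(h^2)$ per piece and, since there are finitely many pieces, $O(h^2)$ in total. This is precisely the reason, noted in the footnote to Lemma~\ref{lem:sampling_local}, that the cube is needed in the local $L^1$ estimate.

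The main obstacle is ensuring the constants hidden in the local $O$-estimates are uniform across all subintervals, so that summing $O(1/h)$ of them and taking a maximum are legitimate. This requires uniform bounds on the relevant derivatives of $u_0$ together with a uniform lower bound on $|f''|$. Both follow from the hypotheses: $u_0$ is piecewise smooth with finitely many pieces, so $u_0'$, $u_0''$, and the coefficient $f''(w_0)w_0''+f'''(w_0)w_0'^2$ appearing in Lemma~\ref{lem:sampling_local} are bounded on each compact piece and hence uniformly bounded, while $|f''|\ge C$ holds by assumption. With uniform constants in place, the maximum and summation arguments above go through verbatim, establishing second-order accuracy in both norms.
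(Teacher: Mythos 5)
Your proposal is correct and follows essentially the same route as the paper's own proof: represent each of the finitely many known discontinuities exactly with a pair of coincident particles, sample the remaining particles on the smooth pieces, and invoke Lemma~\ref{lem:sampling_local} on each subinterval. Your additional care about the uniformity of the hidden constants is a welcome elaboration of a point the paper leaves implicit, but it does not change the argument.
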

\begin{proof}
First, represent each discontinuity in $u_0$ exactly, using two particles. This
consumes only a finite number of points, thus the asymptotic behavior is not influenced.
Second, place the remaining particles equidistantly at $(x_i,u_0(x_i))$.
Since the jumps are represented exactly, the maximum error is second
order by Lemma~\ref{lem:sampling_local}.
\end{proof}
For non-convex flux functions, presented in Sect.~\ref{sec:inflection_points},
the flux function can have inflection points at particles. In an interval bounded by
an inflection particle, the second order accuracy is, in general, lost. First order
accuracy is guaranteed by the following
\begin{lem}
Consider a smooth function $w(x)$ on an interval $x\in\brk{0,h}$,
with $|f''(\bar{w})|>0\;\forall \bar{w}\in\prn{w(0),w(h)}$ (so that the interpolation
\eqref{eq:interpolation_function} exists).
Then the interpolation $v(x)$ between $0$ and $h$, given by
\eqref{eq:interpolation_function}, is at least first order accurate.
\end{lem}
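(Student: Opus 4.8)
The plan is to replace the sharp Taylor/Mean-Value argument of Lemma~\ref{lem:sampling_local} --- which is unavailable here because it divides by $f''(\bar w)$, a quantity that may vanish at the inflection endpoint --- by a crude range-containment estimate that never inverts $f''$. The whole point is that both the interpolant $v$ and the true function $w$ are squeezed into an interval whose length is $O(h)$, so their pointwise difference is automatically $O(h)$.

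First I would record that the interpolation matches $w$ at the endpoints, $v(0)=w(0)=:u_1$ and $v(h)=w(h)=:u_2$, directly from \eqref{eq:interpolation_function}. Next I would argue that $v(x)$ never leaves the closed interval with endpoints $u_1$ and $u_2$. This is where the hypothesis $|f''|>0$ on the \emph{open} interval enters: since $f''$ does not change sign on $(u_1,u_2)$, the map $v\mapsto f'(v)$ is strictly monotone on $[\min(u_1,u_2),\max(u_1,u_2)]$ --- the possible vanishing of $f''$ at an endpoint is a single point and does not destroy strict monotonicity, as $f'(b)-f'(a)=\int_a^b f''>0$. Consequently the defining relation \eqref{eq:interpolation_function}, written as $x(v)=h\,\frac{f'(v)-f'(u_1)}{f'(u_2)-f'(u_1)}$, is a monotone bijection of $[\min(u_1,u_2),\max(u_1,u_2)]$ onto $[0,h]$, so its inverse $v(x)$ stays between $u_1$ and $u_2$ for every $x\in[0,h]$.

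It then remains to bound the width of that interval. Since $w$ is smooth, $w'$ is bounded on the compact interval $[0,h]$, so $|u_2-u_1|=|w(h)-w(0)|\le (\max_{[0,h]}|w'|)\,h=O(h)$, and likewise $|w(x)-w(0)|=O(h)$ for all $x\in[0,h]$. Both $v(x)$ and $w(x)$ therefore lie within $O(h)$ of $w(0)$ --- the former by the range-containment just established, the latter by smoothness --- and the triangle inequality gives $|v(x)-w(x)|=O(h)$ uniformly in $x$, which is the claimed $L^\infty$ first-order accuracy. The $L^1$ bound $\int_0^h|v-w|\ud{x}=O(h^2)$ follows at once by integrating the $O(h)$ pointwise estimate over an interval of length $h$.

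The only step demanding care, and thus the closest thing to an obstacle, is the monotonicity claim at the inflection endpoint; everything else is immediate. The essential contrast with Lemma~\ref{lem:sampling_local} is conceptual rather than technical --- the second-order estimate is lost precisely because it relies on dividing by $f''(\bar w)$, whereas the first-order estimate above uses only that $v$ is trapped in the $O(h)$-wide range of $w$ and never refers to the size of $f''$ at all.
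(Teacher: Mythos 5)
Your proof is correct and follows essentially the same route as the paper's: the paper likewise observes that the interpolant is monotone between its endpoint values, hence trapped in the range of $w$ over $[0,h]$, whose width is $O(h)$ by the boundedness of $w'$. Your justification of the monotonicity at the inflection endpoint (via $f'(b)-f'(a)=\int_a^b f''$) is a detail the paper leaves implicit, but the argument is the same.
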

\begin{proof}
The interpolation $v(x)$ is monotonous, hence one can bound
\begin{equation*}
|v(x)-w(x)|\le |w_{\text{min}}-w_{\text{max}}|\le Ch\;,
\end{equation*}
where $C\ge \max_{x\in [0,h]}|w'(x)|$ and $w_{\text{min}}$, and $w_{\text{max}}$ are
the minimum and maximum values that $w(x)$ attains over the interval.
\end{proof}
\begin{rem}
\label{rem:adaptive:sampling:inflection}
If the initial function is such that the flux crosses an inflection point, the error
attains the form $\|v-w\|_{L^1}\sim a h^2-b\log(h) h^2$ as $h\to 0$. The local $L^1$
error in a single interval abutting the inflection point is of order $O(h^2)$.
The remaining intervals add a total $L^1$ error of order $O(h^2\log(h))$. Hence, the
approximation is less than second order, but greater than first order.
\end{rem}

\subsection{Adaptive Sampling}
\label{subsec:sampling_initial_data_adaptivity}
Due to Thm.~\ref{thm:sampling_global}, the initial data can be approximated with second
order accuracy using equidistantly spaced points. Yet, for a fixed number of points, a
non-equidistant spacing can yield a better approximation. The presented particle method
is designed for non-equidistant points. Hence, adaptive sampling strategies can be easily
used.

To minimize the error for a given number of points we use a particle density proportional to
$e^{-\frac{1}{2}}(x)$. For further information on this see \cite{Devroye1986}, for example. 
We define the integral
\begin{equation*}
E(x) = \int_0^x e^\frac{1}{2}(\xi)\ud{\xi}\;.
\end{equation*}
and sample $n+1$ points at positions
\begin{equation*}
x_i = E^{-1}\prn{E(L)\frac{i}{n}}\;.
\end{equation*} 
In the example presented in Fig.~\ref{fig:error}, this type of adaptive sampling is shown
to reduce the initial error by a factor of about 2.

\section{Entropy}
\label{sec:entropy}
We have argued in Sect.~\ref{subsec:interpolation} that due to the constructed
interpolation the particle method naturally distinguishes shocks from rarefaction fans.
In this section, we show that the method in fact satisfies the entropy condition
\begin{equation}
\eta(u)_t+q(u)\le 0
\label{eq:entropy_condition}
\end{equation}
for a convex entropy function $\eta$, if the shocks are resolved well enough during a merge
step. The following lemma considers the Kru\v zkov entropy pair $\eta_k(u) = \abs{u-k}$,
$q_k(u) = \sign(u-k)(f(u)-f(k))$. Holden et al.~\cite[Chap.~2.1]{HoldenRisebro2002}
show that if \eqref{eq:entropy_condition} is satisfied for $\eta_k, q_k$ (for all $k$),
then it is satisfied for any convex entropy function. Using the
interpolation \eqref{eq:interpolation_function} we show that the numerical solution
obtained by the particle method satisfies this condition. 
\begin{lem}[entropy for merging]
\label{lem:entropy_merging}
Let \mbox{$x_1<x_2=x_3<x_4$} be the locations of four particles, with particles
$2$ and $3$ to be merged, and \mbox{$f''>0$},\footnote{For the case $f''<0$, all
following inequality signs must be reversed.}
i.e.~\mbox{$u_2>u_3$}.
If the value $u_{23}$ resulting from the merge satisfies \mbox{$u_1\ge u_{23}\ge u_4$},
then the Kru\v zkov entropy $\int\abs{u-k}\ud{x}$ does not increase due to the merge.
\end{lem}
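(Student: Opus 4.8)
The plan is to reduce the two-sided Kru\v zkov functional to a one-sided comparison of ``area above a level,'' and then to exploit that the merge moves mass monotonically within each of the two subintervals. First I would record the geometry. Since $x_2=x_3$, the merged particle sits at $x_{23}=x_2=x_3$, so the merge is local to $[x_1,x_4]$ and leaves the partition point unchanged; outside $[x_1,x_4]$ the solution, and hence $\int\abs{u-k}\ud{x}$, is untouched. Write $u_b$ for the solution before the merge (the interpolant $u_1\to u_2$ on $[x_1,x_2]$, a jump from $u_2$ down to $u_3$ at $x_2=x_3$, and the interpolant $u_3\to u_4$ on $[x_3,x_4]$) and $u_a$ for the solution after (the interpolant $u_1\to u_{23}$ on $[x_1,x_2]$ and $u_{23}\to u_4$ on $[x_3,x_4]$). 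By Lemma~\ref{lem:existence_u23} the merge value obeys $u_3\le u_{23}\le u_2$, and by hypothesis $u_4\le u_{23}\le u_1$.

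Next I would extract the pointwise behaviour from \eqref{eq:interpolation_function}, which says $f'(v(x))$ interpolates linearly in $x$ between its endpoint values; since $f''>0$, $f'$ is increasing. On $[x_1,x_2]$ both $u_a$ and $u_b$ start at $u_1$, while their right endpoints satisfy $f'(u_{23})\le f'(u_2)$, so $f'(u_a(x))\le f'(u_b(x))$, hence $u_a\le u_b$ there; moreover $f'(u_a)$ stays between $f'(u_1)$ and $f'(u_{23})$, so $u_a\in[u_{23},u_1]$ and in particular $u_a\ge u_{23}$ on $[x_1,x_2]$. Symmetrically, on $[x_3,x_4]$ both interpolants end at $u_4$, the left endpoints satisfy $f'(u_{23})\ge f'(u_3)$, so $u_a\ge u_b$ there, and $u_a\in[u_4,u_{23}]$ gives $u_a\le u_{23}$ on $[x_3,x_4]$.

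I would then reduce the claim. Using $\abs{u-k}=(u-k)+2(k-u)_+=(k-u)+2(u-k)_+$ together with the area identity $\int_{x_1}^{x_4}u_a\ud{x}=\int_{x_1}^{x_4}u_b\ud{x}$ furnished by the merge condition \eqref{eq:area_cond_merge}, the change in entropy becomes
\begin{equation*}
\begin{split}
\int_{x_1}^{x_4}\abs{u_b-k}\ud{x}-\int_{x_1}^{x_4}\abs{u_a-k}\ud{x}
&=2\int_{x_1}^{x_4}\brk{(u_b-k)_+-(u_a-k)_+}\ud{x}\\
&=2\int_{x_1}^{x_4}\brk{(k-u_b)_+-(k-u_a)_+}\ud{x},
\end{split}
\end{equation*}
so it suffices to show this quantity is nonnegative, using whichever of the two equal forms is convenient. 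For $k\ge u_{23}$ I use the first form: on $[x_3,x_4]$ one has $u_a\le u_{23}\le k$, so $(u_a-k)_+=0\le(u_b-k)_+$, while on $[x_1,x_2]$ the monotonicity of $t\mapsto(t-k)_+$ and $u_a\le u_b$ give $(u_a-k)_+\le(u_b-k)_+$. For $k\le u_{23}$ I use the second form symmetrically: on $[x_1,x_2]$ the inequality $u_a\ge u_{23}\ge k$ kills $(k-u_a)_+$, and on $[x_3,x_4]$, $u_a\ge u_b$ gives $(k-u_a)_+\le(k-u_b)_+$. In either case the integrand is pointwise dominated, so the entropy does not increase.

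The only real obstacle is the bookkeeping: one must verify that the merged value lands in $[u_3,u_2]$ (so that the two interpolants are genuinely ordered on each piece) and that the hypothesis $u_4\le u_{23}\le u_1$ is \emph{exactly} what confines $u_a$ to $[u_{23},u_1]$ on the left piece and to $[u_4,u_{23}]$ on the right. This confinement is what makes the level $k=u_{23}$ the natural splitting point for the case distinction, and it is precisely where the hypothesis is used; everything else is the standard convex-entropy reduction combined with the pointwise monotonicity of the interpolant inherited from \eqref{eq:interpolation_function}.
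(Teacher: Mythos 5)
Your proof is correct and follows essentially the same route as the paper's: the same decomposition $\abs{u-k}=(u-k)+2(k-u)_+$ combined with exact conservation of $\int u\ud{x}$ under the merge, the same case split at the level $k=u_{23}$, and the same pointwise ordering of the before/after interpolants on the two subintervals $[x_1,x_2]$ and $[x_3,x_4]$. The only difference is cosmetic: you derive the pointwise monotonicity explicitly from \eqref{eq:interpolation_function}, where the paper simply asserts that the interpolant is monotone in its endpoint values.
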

\begin{proof}
Consider the segment $[x_1,x_4]$. Let $u(x)$ and $\hat u(x)$ denote the interpolating
function before and after the merge, respectively. The interpolating function $u$ is
monotone in the value of its endpoints, thus $u(x)\le\hat u(x)$ for $x\in [x_2,x_4]$,
and $u(x)\ge\hat u(x)$ for $x\in [x_1,x_2]$. The function 
\begin{equation*}
 I_+(x)=\begin{cases}
x & x>0\\
0 &x\le 0
\end{cases}
\end{equation*}
can be used to write $\abs{x}=x+2\, I_+(-x)$. 
We identify two possible cases: $k\le u_{23}$ and $k\ge u_{23}$.
In the first case, $k\le u_{23}$, we write
\begin{align*}
\int_{x_1}^{x_4}\! \abs{u\!-\!k}\ud{x}
&= \int_{x_1}^{x_4}\!\prn{u\!-\!k}\ud{x}
 +2\int_{x_1}^{x_4} I_+(k\!-\!u)\ud{x}.
\intertext{Due to the definition of $\hat u$ we have}
&= \int_{x_1}^{x_4}\!\prn{\hat u\!-\!k}\ud{x}
 +2\int_{x_1}^{x_2} I_+(k\!-\!u)\ud{x}+2\int_{x_2}^{x_4} I_+(k\!-\!u)\ud{x}.
\intertext{Since $k\le u$ on $[x_1,x_2]$ and that $ I_+(u)$ is
  non-decreasing, we get}
&\ge \int_{x_1}^{x_4}\!\prn{\hat u\!-\!k}\ud{x}
 +0+2\int_{x_2}^{x_4} I_+(k\!-\!\hat u)\ud{x}.
\intertext{Using $k\le\hat u$ on $[x_1, x_2]$ we replace the zero with a different integral}
&= \int_{x_1}^{x_4}\!\prn{\hat u\!-\!k}\ud{x}
 +2\int_{x_1}^{x_2} I_+(k\!-\!\hat
 u)\ud{x}+2\int_{x_2}^{x_4} I_+(k\!-\!\hat u)\ud{x}\\
&= \int_{x_1}^{x_4}\!\prn{\hat u\!-\!k}\ud{x}
 +2\int_{x_1}^{x_4} I_+(k\!-\!\hat u)\ud{x}=\int_{x_1}^{x_4}\! \abs{\hat u\!-\!k}\ud{x}.
\end{align*}
The other option we have is $k\ge u_{23}$.
In this case the proof is quite similar, but we start with $\abs{k-u}$
instead, and remember that on $[x_1,x_2]$ we have that $u\ge\hat u$:
\begin{align*}
\int_{x_1}^{x_4}\! \abs{k\!-\!u}\ud{x}
&= \int_{x_1}^{x_4}\!\prn{k\!-\!u}\ud{x}
 +2\int_{x_1}^{x_4} I_+(u\!-\!k)\ud{x}\\
&= \int_{x_1}^{x_4}\!\prn{k\!-\!\hat u}\ud{x}
 +2\int_{x_1}^{x_2} I_+(u\!-\!k)\ud{x}+2\int_{x_2}^{x_4} I_+(u\!-\!k)\ud{x}\\
&\ge \int_{x_1}^{x_4}\!\prn{k\!-\!\hat u}\ud{x}
 +2\int_{x_1}^{x_2} I_+(\hat u \!-\!k)\ud{x}+0\\
&= \int_{x_1}^{x_4}\!\prn{k\!-\!\hat u}\ud{x}
 +2\int_{x_1}^{x_2} I_+(\hat u\!-\!k)\ud{x}+2\int_{x_2}^{x_4} I_+(\hat u\!-\!k)\ud{x}\\ 
&= \int_{x_1}^{x_4}\!\prn{k\!-\!\hat u}\ud{x}
 +2\int_{x_1}^{x_4} I_+(\!\hat u\!-\!k)\ud{x}=\int_{x_1}^{x_4}\!
 \abs{k\!-\!\hat u}\ud{x}.
\end{align*}
This ends the proof.
\end{proof}
The assumption of Lemma~\ref{lem:entropy_merging} implies that shocks must be reasonably
well resolved before the points defining it are merged. It is satisfied if the points to
the left and right of a shock points are not too far. The condition can be ensured by an
\emph{entropy fix}: A merge is rejected \emph{a posteriori} if the resolution condition
is not satisfied. Then, points are inserted near the shock, and the merge is re-attempted.
\begin{rem}
With the entropy fix, a merge does not necessarily reduce the number of points. Based
on numerical evidence, we conjecture that the statement of Thm.~\ref{thm:arbitrary_times}
remains valid, although its proof cannot be transferred in a straightforward fashion.
\end{rem}
\begin{thm}
\label{thm:entropy}
The presented particle method yields entropy solutions.
\end{thm}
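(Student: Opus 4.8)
The plan is to combine the two ingredients already assembled in this section: the exact-solution property of the interpolation during characteristic motion (Cor.~\ref{cor:exact_solution}) and the entropy dissipation at merges (Lemma~\ref{lem:entropy_merging}). By the result of Holden and Risebro cited above, it suffices to verify the Kru\v{z}kov entropy inequality for the pairs $\eta_k(u)=\abs{u-k}$, $q_k(u)=\sign(u-k)(f(u)-f(k))$ for every $k$; the case of a general convex entropy then follows.

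First I would decompose the time evolution of the numerical solution into the instantaneous management events (finitely many on any bounded time interval) and the intervening phases of pure characteristic motion. On each such phase, Cor.~\ref{cor:exact_solution} asserts that the interpolated function is a classical solution of \eqref{eq:conservation_law}; hence $\eta_k(u)_t+q_k(u)_x=0$ holds in the strong sense, so the entropy is exactly conserved away from the management times. An insertion leaves the interpolant---and therefore the entropy integral---unchanged, since by \eqref{eq:area_cond_interpolant} an inserted particle lies on the existing interpolation. At a merge, the entropy fix guarantees the hypothesis $u_1\ge u_{23}\ge u_4$ of Lemma~\ref{lem:entropy_merging}, so the integral $\int\abs{u-k}\ud{x}$ does not increase.

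To assemble these facts into the distributional entropy inequality, I would test against an arbitrary nonnegative $\phi\in C_c^\infty$ and integrate the pointwise identity $\eta_k(u)_t+q_k(u)_x=0$ by parts over each phase of characteristic motion. The smooth-region contributions cancel, leaving only the jumps of $\int\eta_k(u)\phi\ud{x}$ across the management times. Insertions contribute nothing, and each merge contributes a nonpositive jump: because a merge is localized to a shrinking interval $[x_1,x_4]$ on which $\phi$ is essentially constant, the weighted decrement equals $\phi$ evaluated at the merge location times the unweighted decrement supplied by Lemma~\ref{lem:entropy_merging}, up to higher-order terms. Summing over all events yields $\iint(\eta_k(u)\phi_t+q_k(u)\phi_x)\ud{x}\ud{t}\ge 0$, the required Kru\v{z}kov inequality.

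The main obstacle I anticipate is this localization step: passing from the space-integrated, event-wise statement of Lemma~\ref{lem:entropy_merging} to the weighted distributional inequality requires controlling the error incurred by freezing $\phi$ over the merge interval, and arguing that the accumulated entropy dissipation concentrates correctly at shocks as $h\to 0$. One must also confirm that the entropy fix can always be enforced, so that the hypothesis of Lemma~\ref{lem:entropy_merging} never fails. Finally, to identify the limit of the numerical solutions as \emph{the} entropy solution, I would use the TVD property to extract, via Helly's theorem, a convergent subsequence, and then invoke Kru\v{z}kov's uniqueness theorem together with the entropy inequality just established.
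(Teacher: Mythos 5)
Your proof takes essentially the same approach as the paper's: decompose the evolution into phases of characteristic motion (entropy conserved by Cor.~\ref{cor:exact_solution}), insertions (interpolant unchanged), and merges (entropy non-increasing by Lemma~\ref{lem:entropy_merging} under the entropy fix). The paper's own proof is just these three observations stated in three sentences; the additional machinery you outline---the distributional Kru\v{z}kov inequality with test functions, the localization of the dissipation, and the Helly/uniqueness argument for the limit---is a more rigorous elaboration that the paper does not attempt.
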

\begin{proof}
During the characteristic movement of the points, the entropy is constant. This is due to
Cor.~\ref{cor:exact_solution} which tells that the interpolation is a classical solution
to the conservation law. Particle insertion does not change the interpolation, thus it 
does not change the entropy. Merging does not increase the entropy if the conditions of
Lemma~\ref{lem:entropy_merging} are satisfied.
\end{proof}

\section{Non-Convex Flux Functions}
\label{sec:inflection_points}
So far we have only considered flux functions without inflection points (i.e.~$f''$
always has the same sign) on the range of function values. In this section, we generalize
our method for flux functions $f$ where $f''$ has a finite number of zero crossings
\mbox{$u^*_1<\dots<u^*_k$}.
Between two successive points \mbox{$u\in [u^*_i,u^*_{i+1}]$} the flux function is either
convex or concave. We impose the following requirement for any set of particles: Between
any two particles for which $f''$ has opposite sign, there must be an
\emph{inflection particle} $(x,u_i^*)$. Thus, between two neighboring particles, $f$ never
has an inflection point, and the fundamental ideas from the previous sections transfer.
In particular, the characteristic movement of particles is unaffected, and the
interpolation between two particles remains uniquely defined
by \eqref{eq:interpolation_function}. It has infinite slope at inflection points
(since $f''(u_i^*) = 0$), but this is mostly harmless.
However, two complications arise. First, every proof that relies on having a lower bound
on $f''$ does not transfer easily. Second, merging particles when an inflection particle
is involved requires a special treatment. The standard approach, as presented
in Sect.~\ref{subsec:particle_management}, removes two colliding points and
replaces them with a point of a different function value. If an inflection particle is
involved in a collision, points must be merged in a different way so that an inflection
particle remains.

\begin{figure}
\centering
\begin{minipage}[t]{.32\textwidth}
\centering
\includegraphics[width=.99\textwidth]{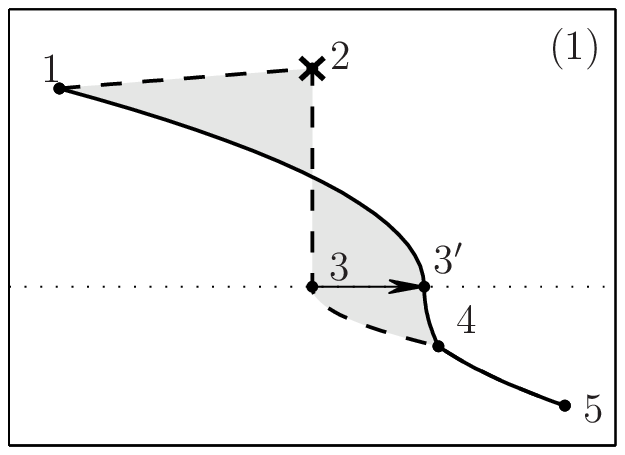}
\end{minipage}
\hfill
\begin{minipage}[t]{.32\textwidth}
\centering
\includegraphics[width=.99\textwidth]{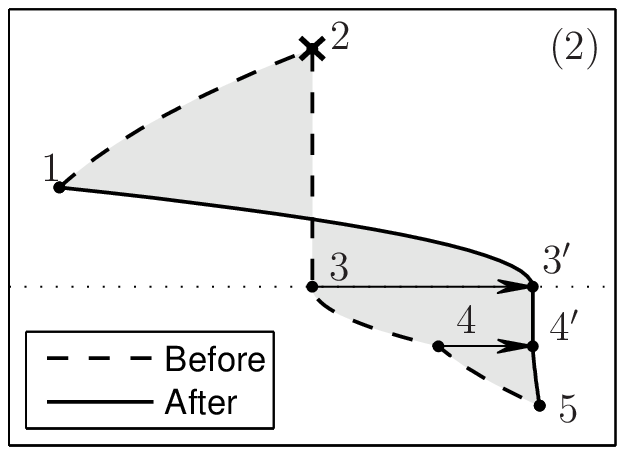}
\end{minipage}
\hfill
\begin{minipage}[t]{.32\textwidth}
\centering
\includegraphics[width=.99\textwidth]{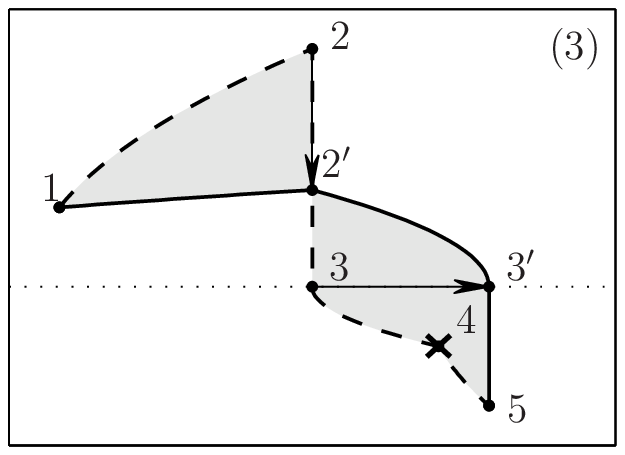}
\end{minipage}
\caption{Particle management around an inflection particle ($f''(u_3)=0$) results in one
of three possible configurations. Each configuration allows for more area under the
function than the previous one. Here we see three archetypal particle configurations
that result.}
\label{fig:particle_management_inflection}
\end{figure}

We present one such special merge for dealing with a single inflection point (we do not
consider here the interaction of two inflection points). Also, for simplicity, we
consider a collision with identical point positions. Since the inflection particle must
remain (although its position may change), we consider five neighboring particles and
not four as before. Let $(x_i,u_i), i=1,\ldots,5$ be these particles so that $x_2=x_3$,
$f''(u_3)=0$, and (WLOG) $f'''>0$, i.e.~the inflection particle is the slowest. The
other cases are simple symmetries of this situation. 
The special merge consists of three different attempts to find the new particle configuration. 
The first two attempts may fail to provide a solution, in which case the next is attempted.
\begin{enumerate}
\item
Remove particle 2 and increase $x_3$ such that area is preserved.
Accept, if $x_3$ is not increased beyond $x_4$.
\item
Remove particle 2, set $x_3=x_4$ and increase both such that area is preserved.
Accept, if $x_3$ and $x_4$ are not increased beyond $x_5$.
\item
Remove particle 4, set $x_3=x_5$ and lower $u_2$ such that area is preserved.
\end{enumerate}
\begin{thm}
\label{thm:non-convex:merge}
One of the three attempts listed above will succeed.
\end{thm}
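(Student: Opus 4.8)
The plan is to reduce the statement to a single Intermediate Value Theorem argument for the enclosed area. Write the conserved area over $[x_1,x_5]$ before the merge as $A = (x_2-x_1)\,a(u_1,u_2) + (x_4-x_2)\,a(u_3,u_4) + (x_5-x_4)\,a(u_4,u_5)$, where the middle term $(x_3-x_2)\,a(u_2,u_3)$ has dropped out because $x_2=x_3$. Each of the three attempts modifies this configuration through a single free parameter, so I would define three post-merge area functions: $A_1(\xi)$ with $\xi=x_3'\in[x_2,x_4]$ (attempt 1), $A_2(\eta)$ with $\eta=x_3=x_4\in[x_4,x_5]$ (attempt 2), and $A_3(w)$ with $w=u_2'$ the lowered value (attempt 3). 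By Lemma~\ref{thm:average_properties}, $a$ is continuous and strictly increasing in each argument; hence $A_1$ and $A_2$ are continuous and (piecewise) affine in their geometric parameters, while $A_3$ is continuous and strictly monotone in $w$. Consequently each attempt admits an area-preserving solution inside its admissible parameter range \emph{if and only if} the target value $A$ lies in that attempt's area range, and the proof reduces to showing that $A$ lies in the union of the three ranges.

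Next I would glue the three ranges into one interval swept monotonically as the parameter runs through attempt 1, then 2, then 3. The first junction is immediate: at $\xi=x_4$ the attempt-1 configuration (particles at $x_1,x_4,x_4,x_5$ with values $u_1,u_3,u_4,u_5$) coincides exactly with the attempt-2 configuration at $\eta=x_4$, so $A_1(x_4)=A_2(x_4)$ and the two ranges are contiguous. The second junction, between attempts 2 and 3, is the delicate one: at $\eta=x_5$ three particles pile up at $x_5$. Here I would exploit that all widths between coincident particles vanish, so every term $(\Delta x)\,a(\cdot,\cdot)$ among them is zero; removing the middle-valued particle (the collapse rule of the footnote in Sect.~\ref{subsec:particle_management}) therefore leaves the area unchanged, and reintroducing particle $2$ \emph{on} the interpolant between its neighbors is area-neutral by the interpolation condition \eqref{eq:area_cond_interpolant}. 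This identifies precisely the configuration from which attempt 3 departs, so the three ranges concatenate into a single interval $[A_{\min},A_{\max}]$.

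Finally I would bracket $A$ and invoke continuity. It remains to verify $A_{\min}\le A\le A_{\max}$, i.e.\ that the two extreme configurations — the original collapsed state (attempt 1 at $\xi=x_2$) at one end and the extreme lowered state of attempt 3 at the other — straddle the conserved area. The sign of the monotonicity is fixed by the standing hypotheses $f'''>0$ and that particle $3$ is the inflection (hence slowest) particle: each successive configuration encloses strictly more area, which is exactly the content of Fig.~\ref{fig:particle_management_inflection}. A direct comparison of $A$ with $A_1(x_2)$ collapses, via Lemma~\ref{thm:average_properties}, to a single inequality between $u_2$ and $u_3$. With the bracketing in hand, applying the Intermediate Value Theorem to the glued, strictly monotone area function produces a parameter at which the post-merge area equals $A$; the attempt whose admissible range contains that parameter succeeds, which proves the theorem. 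I expect the main obstacle to be the attempt-2-to-3 junction, where one must confirm continuity of the area across the coincident-particle collapse and the on-interpolant reappearance of particle $2$, together with checking that the monotonicity direction and the two bracketing inequalities indeed hold under the sign conventions.
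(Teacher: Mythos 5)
Your proposal follows essentially the same route as the paper's proof: the three attempts sweep a continuous, monotonically increasing range of areas whose endpoints match up, the conserved area $A$ is bracketed between the smallest configuration of attempt 1 (reachable by lowering $u_2$ to $u_3$, hence below $A$) and the largest of attempt 3 (above $A$), and the Intermediate Value Theorem finishes the argument. Your explicit handling of the attempt-2-to-3 junction via the coincident-particle collapse and on-interpolant reinsertion of particle 2 spells out a step the paper leaves implicit, but it is the same argument.
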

\begin{proof}
Following from continuity and monotonicity of the average function $a(\cdot,\cdot)$,
the three steps provide a continuous, monotonous increase in area. In the first attempt,
the smallest area is achieved with $x_3$ unchanged. This area is necessarily smaller
than the original area (since one can also get here by lowering $u_2$ to $u_3$). 
The area increases as $x_3$ is increased. The configuration with $x_3=x_4$ has the
maximum area for the first attempt, and the minimum area for the second. Again, the area
increases as $x_3=x_4$ increase. 
The configuration with $x_3=x_4=x_5$ has the maximum area for the second attempt, and
the minimum for the third. Area increases as the new value of $u_2$ increases,
and achieves its maximum value for an unchanged $u_2$. 
This area is necessarily larger than the original area. 
Consequently, one of the attempts must succeed.
\end{proof}
\begin{rem}
The resulting configuration may involve a new discontinuity
(since $x_3=x_4$ or $x_3=x_5$). However, this is not a shock, but a rarefaction,
since the particles will move away from each other. Consequently, these
particles should \emph{not} be merged.
\end{rem}

The five-point particle management guarantees that in each merging step one particle is
removed, as used in Thm.~\ref{thm:arbitrary_times}.
In Sect.~\ref{subsec:numerics_non_convex_flux}, numerical results on the Buckley-Leverett
equation are presented. Since each of the three attempts covers a non-overlapping range of
areas, the resulting configuration is independent of the order in which they are attempted.

\section{Sources}
\label{sec:sources}
An important extension of the conservation law \eqref{eq:conservation_law_space_indep}
is to allow a source term in the right hand side. This can be a function of $x$, $t$,
the function value $u$, and in principle also of derivatives $u_x$, $u_{xx}$, etc.
In the current work we consider the simple balance law
\begin{equation}
u_t+f(u)_x = g(x,u)\;.
\label{eq:conservation_source}
\end{equation}
The method of characteristics \cite{Evans1998} yields an evolution for each particle
\begin{equation}
\begin{cases}
\dot x = f'(u) \\
\dot u = g(x,u)
\end{cases}
\label{eq:characteristic_equation_sources}
\end{equation}
With sources, the equation ceases to have exact conservation properties. Consequently, the
interpolation derived in Sect.~\ref{subsec:interpolation} is no longer a solution.
While in special cases more complicated interpolation functions could be defined
(depending on both $f$ and $g$), here we construct an approximate method that is more
general. Assume that the advection dominates over the source, which is the case in many
applications. Thus, the interpolation and particle management are based solely on the
flux function $f$.

The source $g$ results in a vertical movement of the particles during their Lagrangian
evolution. While in the absence of sources the next time of a particle merge can be
computed a priori, now we solve the particle
evolution \eqref{eq:characteristic_equation_sources} numerically, for instance by an
explicit Runge-Kutta scheme. Merging takes place when two particles are too close
(see Rem.~\ref{rem:merging_robust}).
In Sect.~\ref{subsec:numerics_source_terms}, we present numerical results.
\begin{rem}
The balance law \eqref{eq:conservation_source} is solved correctly at characteristic points.
Particle management, however, is based on an ``incorrect'' interpolation, since the source
is neglected for the definition of area. The numerical results in
Sect.~\ref{subsec:numerics_source_terms} indicate that this does not cause problems for
merging particles. However, inserting particles into large gaps may lead to significant
misplacements, when the source is ``active''. Thus, with sources, insertion should
either be avoided completely, or particles be adaptively refined. We shall address the
important aspect of adaptivity in future work.
\end{rem}
The presented approach incorporates sources directly into the characteristic equations.
An alternative approach is operator splitting: First move particles neglecting the source,
then correct function values according to the source. While the characteristic method is
more precise, the splitting approach is more general. In particular, it can deal with
source terms that involve derivatives of $u$.

\section{Numerical Results}
\label{sec:numerical_results}
The presented particle method is applied to various examples. In all cases, the ``exact''
reference solution is obtained or verified by a high resolution CLAWPACK \cite{Clawpack}
computation. We compare the accuracy of the particle method with numerical solutions
obtained by CLAWPACK, considering similar resolutions. By construction, the particle
method does not keep a fixed resolution. To compare resolutions we use the same number of
particles to initialize the particle method as the number of cells in the corresponding
CLAWPACK run. By keeping $d_\text{max} = \frac{4}{3}h$, we find that the number of
particles remains more-or-less constant throughout the computation. Shocks are located via
post-processing before the error is measured.

In Sect.~\ref{subsec:numerics_convergence_error}, the evolution and the formation of shocks
of smooth initial data under a convex flux function are considered. The convergence error
before and after the occurrence of shocks is investigated numerically.
In Sect.~\ref{subsec:numerics_non_convex_flux}, as an example of a non-convex flux function,
the Buckley-Leverett equation is considered, and in
Sect.~\ref{subsec:numerics_source_terms}, Burgers' equation with a source is simulated.
The source code and all presented examples can be found on the 
\texttt{particleclaw} web page \cite{Particleclaw}.

\begin{figure}
\centering
\begin{minipage}[t]{.32\textwidth}
\centering
\includegraphics[width=0.99\textwidth]{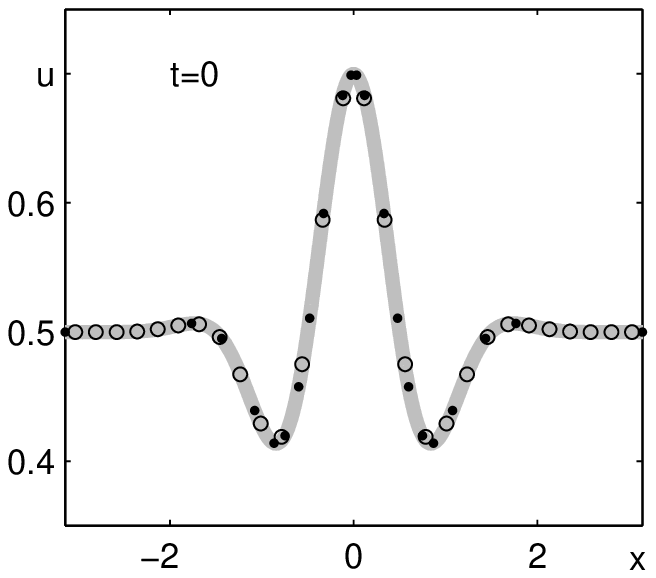}
\end{minipage}
\hfill
\begin{minipage}[t]{.32\textwidth}
\centering
\includegraphics[width=0.99\textwidth]{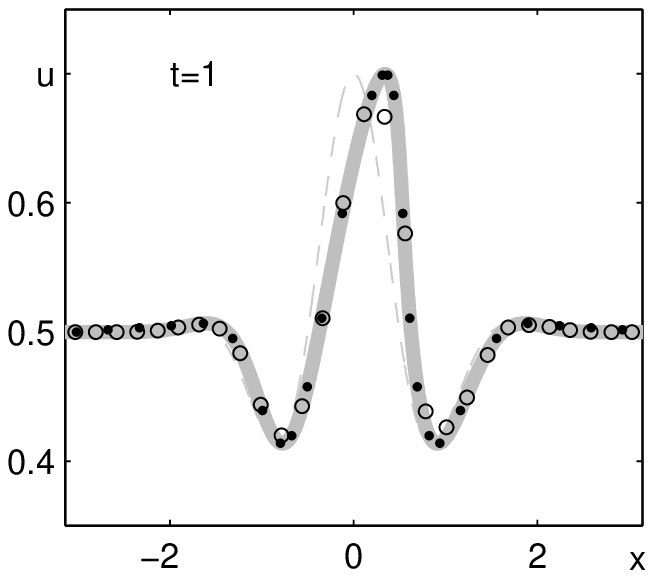}
\end{minipage}
\hfill
\begin{minipage}[t]{.32\textwidth}
\centering
\includegraphics[width=0.99\textwidth]{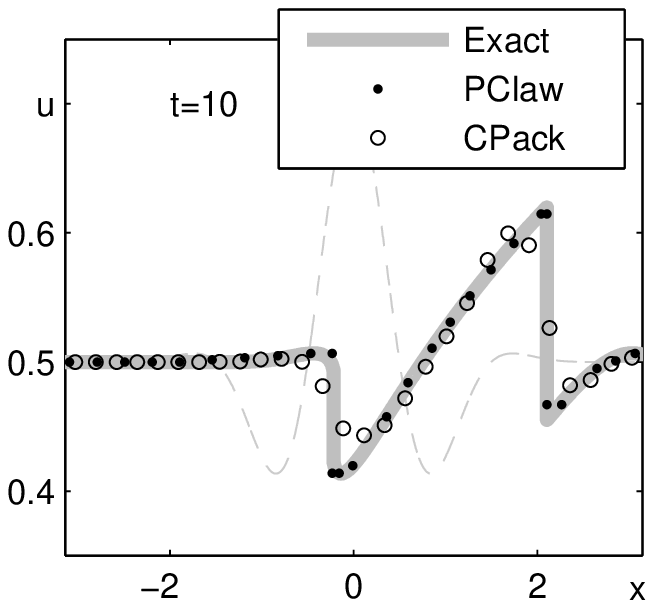}
\end{minipage}
\caption{The particle method for $f(u)=\tfrac{1}{4}u^4$ before and after shocks arise. 
The gray solid line is the hi-resolution solution from CLAWPACK and
the dashed line is the initial condition.}
\label{fig:u4}
\end{figure}

\subsection{Convergence Error}
\label{subsec:numerics_convergence_error}
Figure~\ref{fig:u4} shows the smooth initial function
$u_0(x) = 0.5+0.2\exp\!\prn{-x^2}\cos(\pi x)$, and its time evolution under the flux
function $f(u)=\frac{1}{4}u^4$. Initially, we sample points on the function $u_0$.
At time $t=1$, the solution is still smooth, thus the particles lie exactly on the
solution. By the time $t=10$, a shock has emerged and interacted with a rarefaction.
Although the numerical solution uses only a few points, it represents the
true solution well.

{}From this example, the numerical accuracy of the particle method is extracted.
For a sequence of particle densities, the initial data are sampled twice: equally spaced
and adaptively. The particle method is applied with post-processing, as described in
Rem.~\ref{rem:shock_location}. The error is measured in the true $L^1$ norm for function,
which is possible due to the interpolation \eqref{eq:interpolation_function}.
Figure~\ref{fig:error} shows results for initial sampling error, and error after
a time evolution. Initially ($t=0$), the approximation is second order accurate for
both sampling strategies (see Thm.~\ref{thm:sampling_global}). The advantage of the
adaptive sampling is evident from the lower error that it creates in the interpolation.

After shocks have occurred ($t=10$), the approximate solution without locating shocks is
only first order accurate, since at any shock an error of the order height$\times$width
of the shock is made. However, the post-processing step recovers the second order accuracy.
Hence, the particle method is second order accurate, even at locating shocks. One also sees
that the advantage gained initially from the adaptive sampling is nearly lost at $t=10$. 

For this example, CLAWPACK yields results of similar accuracy, as shown in
Fig.~\ref{fig:error:compare}. Since CLAWPACK is a method for calculating cell averages,
we cannot find the true $L^1$ error. Instead, given a coarse-grid calculation and a
fine-grid reference solution, we calculate the error in the area (function value times
cell-size) for each of the coarse cells using the fine-grid solution. Adding all these
errors together gives the relevant $L^1$ error. One can see that the CLAWPACK solution
drops to first-order accuracy around shocks, which is due to numerical dissipation. To
investigate the accuracy away from shocks, we also consider the error while ignoring a
small fixed domain surrounding each shock. The same error measure is also applied to the
error calculations of our particle method. Of course, since post-processing already yields
second order accuracy, this only reduces the size of the error, and does not change the
order of convergence, as it does with CLAWPACK. 
\begin{figure}
\centering
\begin{minipage}[t]{.48\textwidth}
\centering
\includegraphics[width=.99\textwidth]{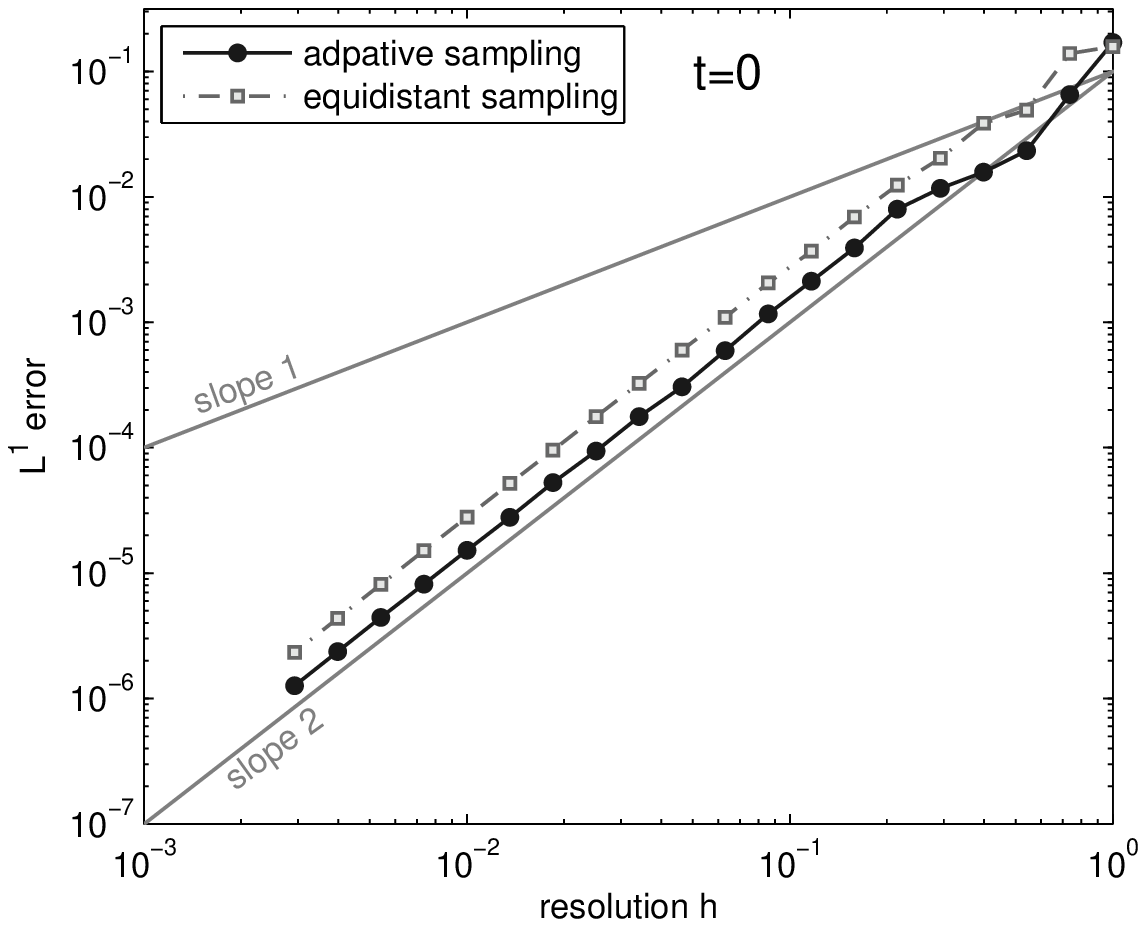}
\end{minipage}
\hfill
\begin{minipage}[t]{.48\textwidth}
\centering
\includegraphics[width=.99\textwidth]{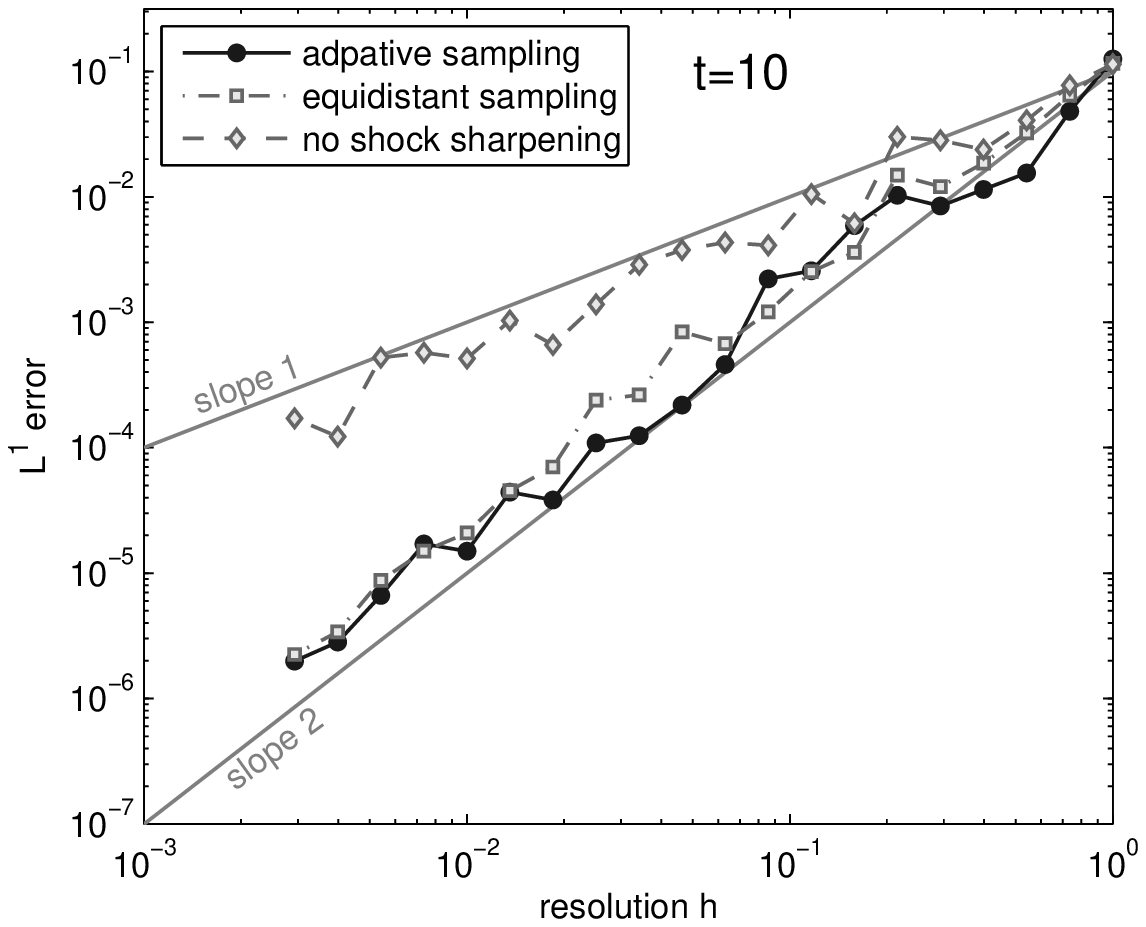}
\end{minipage}
\caption{$L^1$ Error of the particle method when solving for the flux
function $\frac{1}{4} u^4$, initially, and after a shock has been formed. The figures
compare equidistant with adaptive sampling and show the error that would result from
not sharpening the shock.}
\label{fig:error}
\end{figure}

\begin{figure}
\centering
\begin{minipage}[t]{.48\textwidth}
\centering
\includegraphics[width=.99\textwidth]{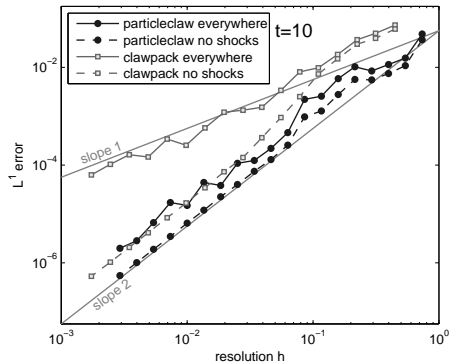}
\end{minipage}
\caption{A comparison of the errors given by  the particle method and CLAWPACK when
solving for the flux  function $\frac{1}{4}u^4$. Without removing the errors from the
shock region, CLAWPACK is only first-order accurate.}
\label{fig:error:compare}
\end{figure}
\subsection{Non-Convex Flux Function}
\label{subsec:numerics_non_convex_flux}
As an example of a non-convex flux function, we consider the Buckley-Leverett equation
\begin{equation}
\label{eq:bucklev}
u_t+\prn{f(u)}_x = 0\ \text{, with}\ f(u) = u^2/(u^2+\tfrac{1}{2}(1-u)^2)\;,
\end{equation}
which is a simple model for two-phase fluid flow in a porous medium
(see LeVeque \cite{LeVeque2002}). We consider piecewise constant initial data with a large
downward jump crossing the inflection point, and a small upward jump. The large jump
develops a shock at the bottom and a rarefaction at the top, the small jump is a pure
rarefaction. Around $t=0.2$, the two similarity solutions interact, thus lowering the
separation point between shock and rarefaction. Figure~\ref{fig:results_inflection} shows
numerical results. The solution obtained by the particle method (dots) is compared to a
second order CLAWPACK solution (circles) of similar resolution.
The particle method captures the behavior of the solution better; in particular, the
rarefaction is represented very accurately. Only directly near the shock are inaccuracies
visible. The solution away from the shock is nearly unaffected by the error at the shock.

\begin{figure}
\centering
\begin{minipage}[t]{.32\textwidth}
\centering
\includegraphics[width=1.02\textwidth]{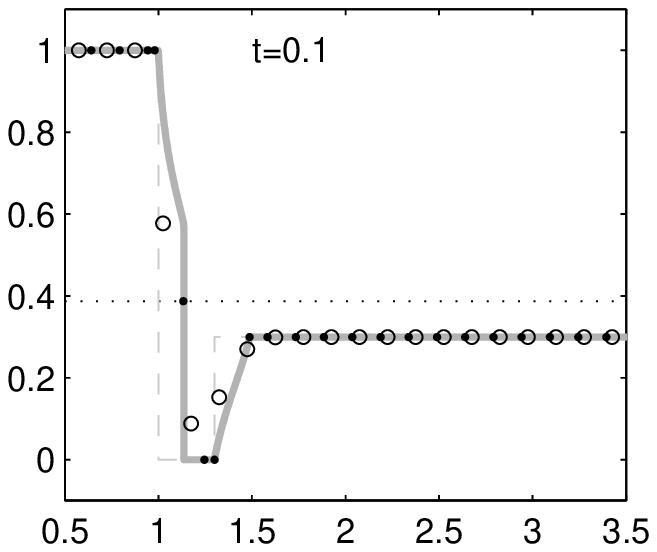}
\end{minipage}
\hfill
\begin{minipage}[t]{.32\textwidth}
\centering
\includegraphics[width=1.02\textwidth]{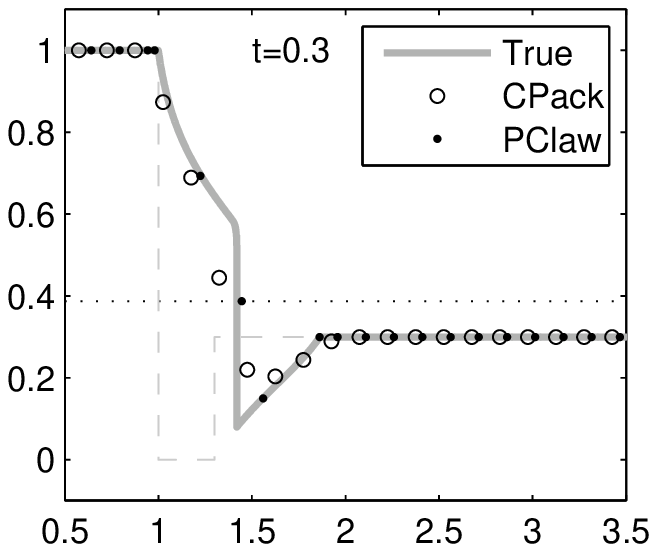}
\end{minipage}
\hfill
\begin{minipage}[t]{.32\textwidth}
\centering
\includegraphics[width=1.02\textwidth]{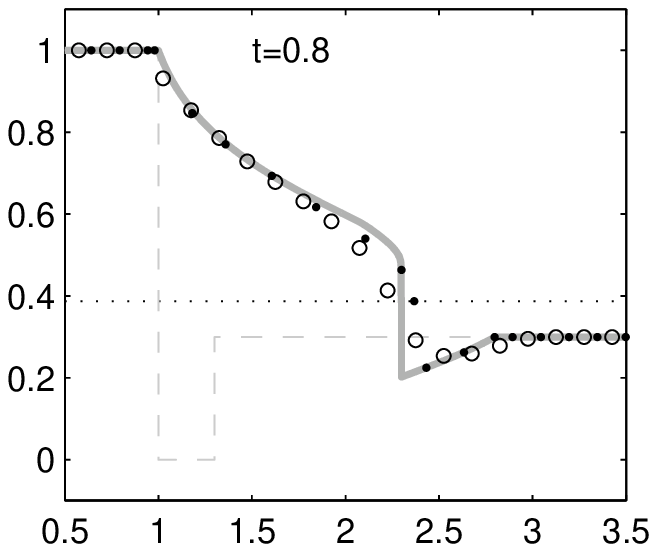}
\end{minipage}
\caption{Numerical results for the Buckley-Leverett equation at various times $t$}
\label{fig:results_inflection}
\end{figure}
\begin{figure}
\centering
\begin{minipage}[t]{.48\textwidth}
\centering
\includegraphics[width=.99\textwidth]{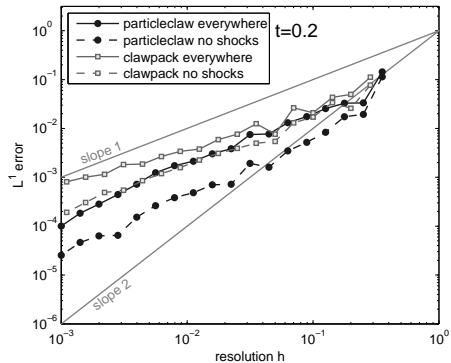}
\end{minipage}
\caption{A comparison of the errors given by  the particle method and CLAWPACK when
solving the Buckley-Leverett equation \eqref{eq:bucklev}. While the particle method
gets significantly better results than CLAWPACK (with or without the errors from
around the shock), both methods are less than second order accurate.}
\label{fig:error:compare:bucklev}
\end{figure}
Numerical results show (see Fig.~\ref{fig:error:compare:bucklev}) that both CLAWPACK and
the presented particle method do not achieve second order accuracy for this problem.
Nevertheless, the particle method has a much better accuracy than
CLAWPACK.
The drop in accuracy is, presumably, due to inflection point in the
Buckley-Leverett flux function, similar to the drop in accuracy of the
sampling outlined in Remark~\ref{rem:adaptive:sampling:inflection}.

\subsection{Source Terms}
\label{subsec:numerics_source_terms}
We consider Burgers' equation with a source
\begin{equation}
u_t+\prn{\tfrac{1}{2}u^2}_x = b'(x)u\;.
\label{eq:burgers:source}
\end{equation}
It is a simple model for shallow water flow over a bottom profile $b(x)$.
As in \cite{KarlsenMishraRisebro2008}, we consider the domain $x\in [0,10]$, and choose
\begin{equation*}
b(x) =
\begin{cases}
\cos(\pi x) & x\in [4.5,5.5] \\
0 & \text{otherwise}\;.
\end{cases}
\end{equation*}
The source term is included into the method of characteristics, as explained
in Sect.~\ref{sec:sources}. The time stepping is done by a fourth order Runge-Kutta
scheme. Figure~\ref{fig:results_source} shows the computational results. The particle
method (dots) approximates the solution significantly better than the second order
CLAWPACK scheme (circles). A particular aspect in favor of the characteristic approach
is the precise (up to the resolution of the ODE solver) recovery of the function values
after the obstacle. Since particles are
moved independently according to the characteristic equations, an accurate time
integration obtains the function values after the obstacle almost exactly, independent
of the resolution of particles. Note that an efficient treatment of the source
requires a special consideration of its discontinuities, either in the quadrature of the
source (finite volume), or in the integration of the characteristic ODE (particle scheme).

\begin{figure}
\begin{tabular}{cc}
\begin{minipage}[t]{.48\textwidth}
\includegraphics[width=.95\textwidth]{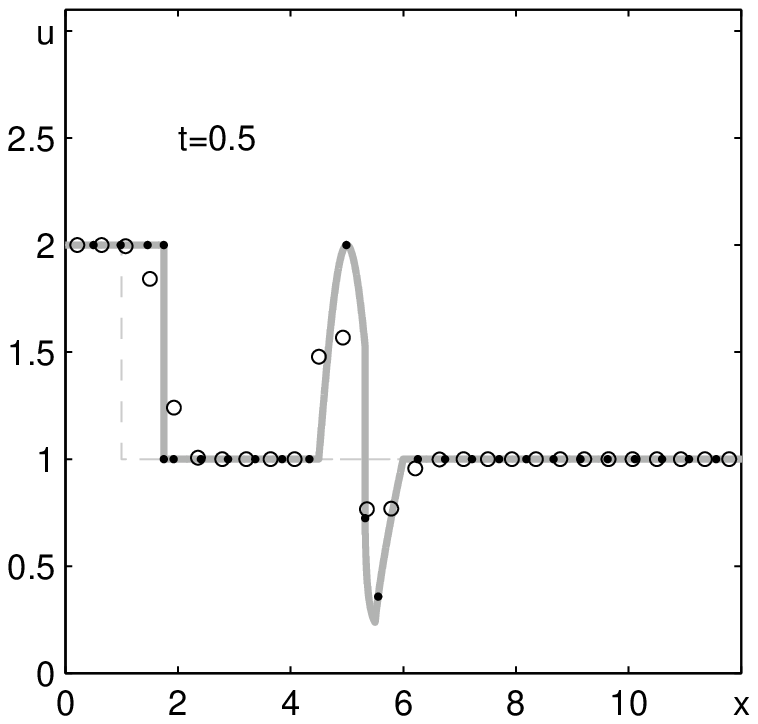}
\end{minipage}
&
\begin{minipage}[t]{.48\textwidth}
\includegraphics[width=.95\textwidth]{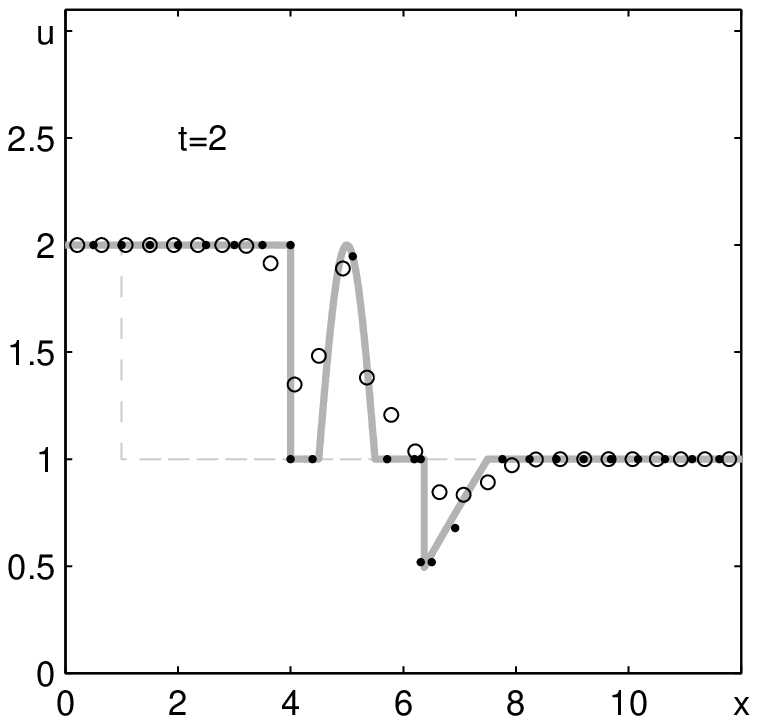}
\end{minipage}
\\
\begin{minipage}[t]{.48\textwidth}
\includegraphics[width=.95\textwidth]{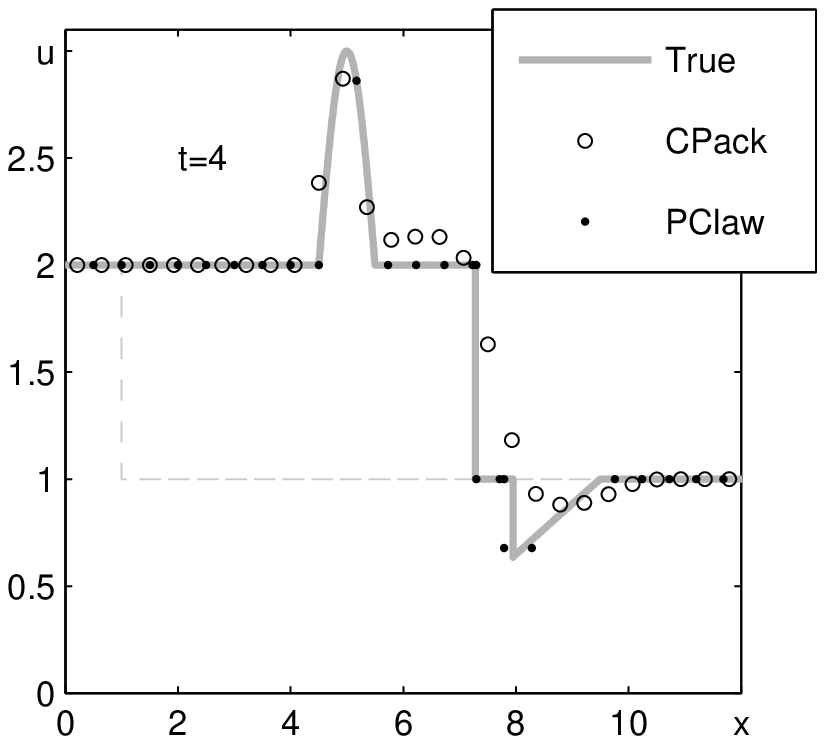}
\end{minipage}
&
\begin{minipage}[t]{.48\textwidth}
\includegraphics[width=.95\textwidth]{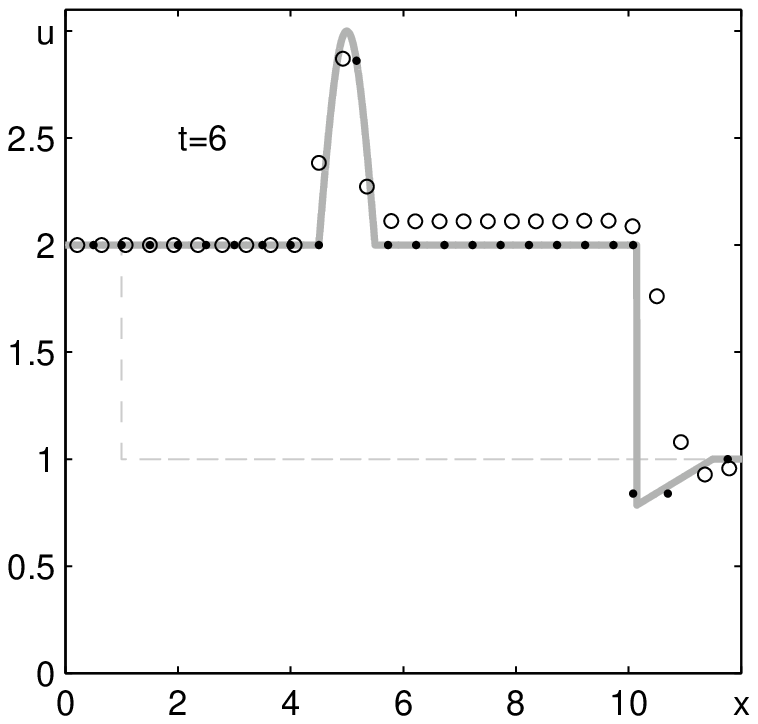}
\end{minipage}
\end{tabular}

\caption{Time evolution of Burgers' equation with a source as given
by \eqref{eq:burgers:source}. The dots, circles, gray line and dashed line are,
respectively, our particle method, CLAWPACK, the high-resolution solution and the
initial conditions.}
\label{fig:results_source}
\end{figure}

\begin{figure}
\centering
\begin{minipage}[t]{.48\textwidth}
\centering
\includegraphics[width=.99\textwidth]{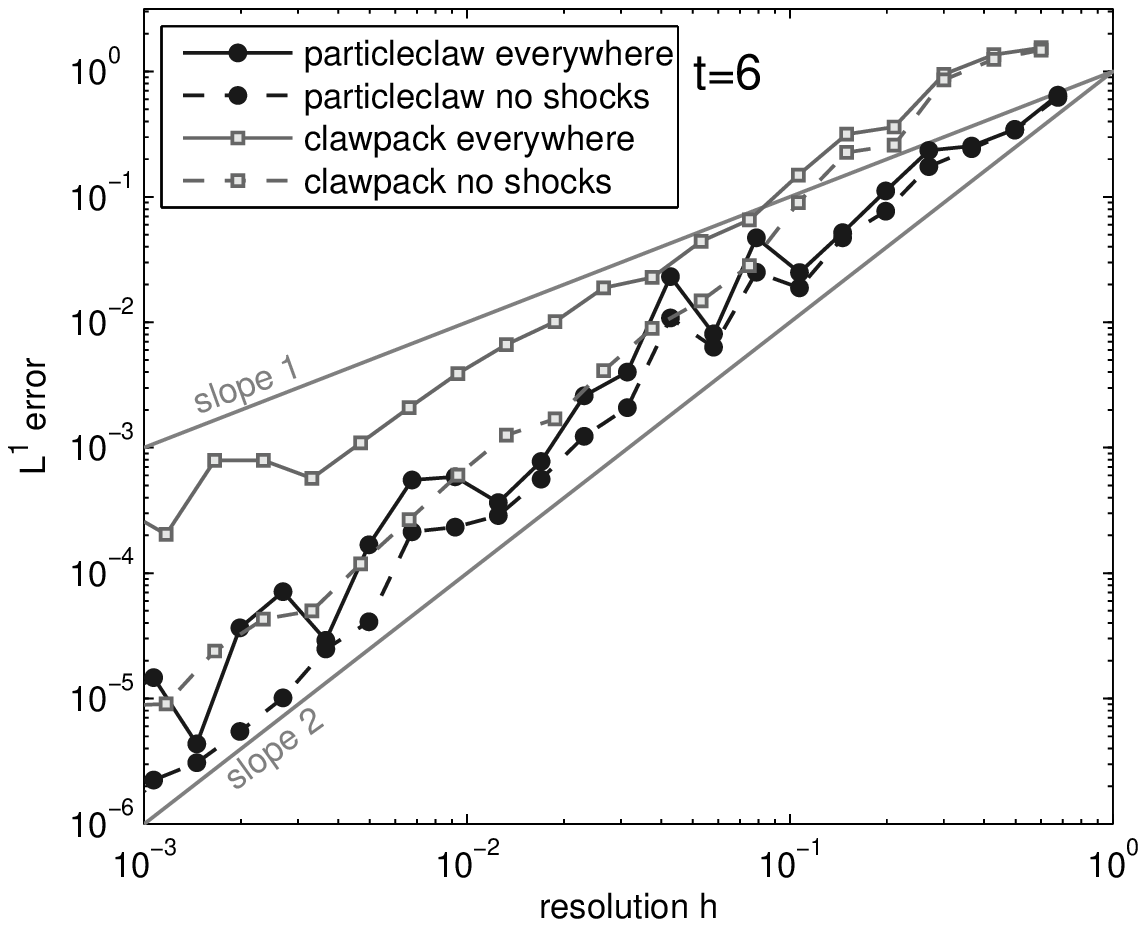}
\end{minipage}
\caption{A comparison of the errors given by  the particle method and CLAWPACK when
solving for Burgers' equation with a source term \eqref{eq:burgers:source},
at $t=6$ (after the two shocks have interacted). Here too, the particle method
provides a better accuracy than CLAWPACK.}
\label{fig:error:compare:source}
\end{figure}

\section{Conclusions and Outlook}
\label{sec:outlook}
We have presented a particle method that combines the method of characteristic, local
similarity solutions, and particle management to a numerical scheme for for 1D scalar
conservation laws. The method conserves area exactly. It is TVD, yet second order
accurate, even at locating shocks. It performs promisingly in various examples, as the
numerical comparisons with a second order finite volume scheme show.

The particle method is an interesting alternative to fixed grid approaches, whenever
conservation of mass is crucial, or shocks need to be located accurately. In addition,
entropy is reduced only when particles are merged, which makes the approach suited for
applications in which the evolution of mass and energy has to be reflected as precisely
as possible. Furthermore, the method yields good results when few particles are used,
in particular shocks between nearly-constant states are located well. This makes the
approach attractive whenever scalar 1D conservation laws arise as sub-problems in a
large computation, and only a few degrees of freedom can be devoted to the numerical
solution of a single sub-problem. Examples are flows in networks (e.g.~car traffic),
and PDE constrained optimization.

As a first generalization, we have included source terms in the scheme. The method,
still based on the method of characteristics, yields solutions of rather striking accuracy,
compared to classical finite volume schemes. In future work, more general source terms
will be considered, such as non-local convolutions, and terms involving derivatives of the
solution. In these cases, the method of characteristics has to be replaced by a more
general splitting approach.

Fundamental steps towards a more powerful particle method will be the generalization
to higher space dimensions and to systems of conservation laws.
Problems in multiple dimensions can be approximated by 1D problems using fractional
steps. In this sense, the particle scheme could replace classical 1D Riemann solvers
by 1D \emph{wave solvers}. However, this approach is not fully satisfactory, since due
to the required remeshing steps the benefits of a meshfree particle approach may be
lost. On the other hand, with truly meshfree approaches in 2D/3D, one has to address the
challenge that particles forming a shock need not necessarily collide. Possible remedies
are the introduction of a numerical pressure, or the tracking of an unstructured
triangular mesh. The movement of particles according to the method of characteristics
can also be interpreted as a moving mesh approach \cite{BainesHubbardJimack2005}.
Thus, ideas from this area could lead to particle strategies in higher space dimensions.

With systems, one difficulty is the presence of multiple characteristic velocities.
One approach is to choose one Lagrangian velocity, which need not be a characteristic
velocity. Coupling terms that appear in the moving frame equations are treated as source
terms for each individual equation. Alternative approaches may use exact similarity
solutions of the full system as building blocks. In this case, a single set of
particles may not suffice, since two neighboring similarity solutions may interact.

\bibliographystyle{amsplain}
\bibliography{references_complete}

\providecommand{\bysame}{\leavevmode\hbox to3em{\hrulefill}\thinspace}
\providecommand{\MR}{\relax\ifhmode\unskip\space\fi MR }
\providecommand{\MRhref}[2]{%
  \href{http://www.ams.org/mathscinet-getitem?mr=#1}{#2}
}
\providecommand{\href}[2]{#2}
\begin{thebibliography}{10}

\bibitem{BainesHubbardJimack2005}
M.~J. Baines, M.~E. Hubbard, and P.~K. Jimack, \emph{A moving mesh finite
  element algorithm for the adaptive solution of time-dependent partial
  differential equations with moving boundaries}, Appl. Numer. Math.
  \textbf{54} (2005), 450--469.

\bibitem{Chorin1973}
A.~J. Chorin, \emph{Numerical study of slightly viscous flow}, J. Fluid Mech.
  \textbf{57} (1973), 785--796.

\bibitem{Clawpack}
\emph{Clawpack}, Website, \url{http://www.clawpack.org}.

\bibitem{CourantIsaacsonRees1952}
R.~Courant, E.~Isaacson, and M.~Rees, \emph{On the solution of nonlinear
  hyperbolic differential equations by finite differences}, Comm. Pure Appl.
  Math. \textbf{5} (1952), 243--255.

\bibitem{Devroye1986}
L.~Devroye, \emph{Non-uniform random variate generation}, Springer, New York,
  1986.

\bibitem{Dilts1999}
G.~A. Dilts, \emph{Moving least squares particles hydrodynamics {I},
  consistency and stability}, Internat. J. Numer. Methods Engrg. \textbf{44}
  (1999), 1115--1155.

\bibitem{Evans1998}
L.~C. Evans, \emph{Partial differential equations}, Graduate Studies in
  Mathematics, vol.~19, American Mathematical Society, 1998.

\bibitem{FarjounSeibold2009_1}
Y.~Farjoun and B.~Seibold, \emph{Solving one dimensional scalar conservation
  laws by particle management}, Meshfree methods for Partial Differential
  Equations IV (M.~Griebel and M.~A. Schweitzer, eds.), Lecture Notes in
  Computational Science and Engineering, vol.~65, Springer, 2009, pp.~95--109.

\bibitem{GingoldMonaghan1977}
R.~A. Gingold and J.~J. Monaghan, \emph{Smoothed particle hydrodynamics --
  {T}heory and application to nonspherical stars}, Mon. Not. R. Astron. Soc.
  \textbf{181} (1977), 375.

\bibitem{Godunov1959}
S.~K. Godunov, \emph{A difference scheme for the numerical computation of a
  discontinuous solution of the hydrodynamic equations}, Math. Sbornik
  \textbf{47} (1959), 271--306.

\bibitem{HartenEngquistOsherChakravarthy1987}
A.~Harten, B.~Engquist, S.~Osher, and S.~Chakravarthy, \emph{Uniformly high
  order accurate essentially non-oscillatory schemes. {III}}, J. Comput. Phys.
  \textbf{71} (1987), no.~2, 231--303.

\bibitem{HoldenHoldenHeghKrohn1988}
H.~Holden, L.~Holden, and R.~Hegh-Krohn, \emph{A numerical method for first
  order nonlinear scalar conservation laws in one dimension}, Comput. Math.
  Appl. \textbf{15} (1988), no.~6--8, 595--602.

\bibitem{HoldenRisebro2002}
H.~Holden and N.~H. Risebro, \emph{Front tracking for hyperbolic conservation
  laws}, Springer, 2002.

\bibitem{KarlsenMishraRisebro2008}
K.~H. Karlsen, S.~Mishra, and N.~H. Risebro, \emph{Well-balanced schemes for
  conservation laws with source terms based on a local discontinuous flux
  formulation}, Math. Comp. \textbf{78} (2009), 55--78.

\bibitem{LeVeque2002}
R.~J. Le~Veque, \emph{Finite volume methods for hyperbolic problems}, first
  ed., Cambridge University Press, 2002.

\bibitem{LiuOsherChan1994}
X.-D. Liu, S.~Osher, and T.~Chan, \emph{Weighted essentially non-oscillatory
  schemes}, J.~Computat.~Phys. \textbf{115} (1994), 200--212.

\bibitem{Lucy1977}
L.~Lucy, \emph{A numerical approach to the testing of the fission hypothesis},
  Astronomical Journal \textbf{82} (1977), 1013--1024.

\bibitem{Monaghan2005}
J.~J. Monaghan, \emph{Smoothed particle hydrodynamics}, Rep. Prog. Phys.
  \textbf{68} (2005), no.~8, 1703--1759.

\bibitem{Particleclaw}
\emph{Particleclaw}, Website,
  \url{http://www-math.mit.edu/~seibold/research/particleclaw/}.

\bibitem{VanLeer1974}
B.~van Leer, \emph{Towards the ultimate conservative difference scheme {II}.
  {M}onotonicity and conservation combined in a second order scheme},
  J.~Computat.~Phys. \textbf{14} (1974), 361--370.

\end{thebibliography}

\appendix
\section{Appendix}
The proofs of Lemmas~\ref{lem:merge:lower_bound} and~\ref{lem:merge:upper_bound} use a
short lemma:
\begin{lem}
The derivative of $a(u,v)$ with respect to either of its variables is bounded from below
and above as follows:
\begin{equation*}
\frac12\prn{\frac{\min f''}{\max f''}}^2\le
\brk{\pd{a}{u}(u,v),\pd{a}{v}(u,v)}
\le\frac12\prn{\frac{\max f''}{\min f''}}^2.
\end{equation*}
\end{lem}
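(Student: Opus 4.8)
The plan is to compute the two partial derivatives explicitly, recast each as a ratio of weighted integrals, and then bound numerator and denominator separately using $m\le f''\le M$, where $m=\min f''$ and $M=\max f''$ over the range spanned by $u$ and $v$. By properties (1) and (2) of Lemma~\ref{thm:average_properties} I may assume without loss of generality that $f''>0$ and $u\le v$; by the symmetry $a(u,v)=a(v,u)$ it then suffices to treat $\pd{a}{v}$, since the bound on $\pd{a}{u}$ follows by interchanging the roles of the two arguments.

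First I would differentiate $a(u,v)=\int_u^v f''(s)s\ud{s}\big/\int_u^v f''(s)\ud{s}$ using the quotient rule. Writing $D=\int_u^v f''(s)\ud{s}$, the fundamental theorem of calculus gives the compact form
\begin{equation*}
\pd{a}{v}(u,v)=\frac{f''(v)\prn{v-a(u,v)}}{D},
\end{equation*}
and an analogous expression $\pd{a}{u}(u,v)=f''(u)\prn{a(u,v)-u}/D$ for the other variable. Both are manifestly positive, which re-confirms the monotonicity already established in Lemma~\ref{thm:average_properties}.

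The crucial step is to rewrite the factor $v-a$ as a single weighted integral,
\begin{equation*}
v-a(u,v)=\frac{\int_u^v f''(s)\prn{v-s}\ud{s}}{\int_u^v f''(s)\ud{s}}.
\end{equation*}
Since $v-s\ge 0$ on $[u,v]$ and $\int_u^v\prn{v-s}\ud{s}=\tfrac12(v-u)^2$, bounding $f''$ by $m$ and $M$ in the appropriate directions yields $\frac{m}{2M}(v-u)\le v-a\le\frac{M}{2m}(v-u)$.

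Substituting these estimates, together with $m\le f''(v)\le M$ and $m(v-u)\le D\le M(v-u)$, into the formula for $\pd{a}{v}$ gives the claim: the upper bound stacks one factor $M/m$ from $v-a$ and one from $f''(v)/D$, producing $\frac12\prn{M/m}^2$, and the lower bound is symmetric, producing $\frac12\prn{m/M}^2$. The only point requiring care is tracking the direction of each inequality so that the ratios accumulate correctly, and recognizing that the factor $\tfrac12$ is exactly the constant coming from $\int_u^v\prn{v-s}\ud{s}$. I expect no genuine obstacle beyond this bookkeeping, since the representation of $v-a$ as a weighted integral reduces the whole estimate to elementary bounds on $f''$.
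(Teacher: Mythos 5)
Your proposal is correct and follows essentially the same route as the paper: differentiate the quotient, rewrite the numerator as the weighted integral $\int_u^v f''(s)(v-s)\ud{s}$ (the paper writes the derivative directly as $f''(u)\int_u^v f''(\omega)(\omega-u)\ud{\omega}/D^2$, which is algebraically identical to your factorization through $v-a$), and bound numerator and denominator by $\min f''$ and $\max f''$, with the factor $\tfrac12$ coming from $\int_u^v(v-s)\ud{s}=\tfrac12(v-u)^2$. The WLOG reduction via symmetry and your sign bookkeeping are both sound.
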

\noindent Here $\max f''$ and $\min f''$ are taken over the interval $[u,\,v]$.
\begin{proof}
This lemma follows from the definition of $a$:
\begin{align*}
\pd{a}{u}(u,v)
&=\frac{f''(u) \int_u^v f''(\omega)(\omega-u)\ud{\omega}}
{\prn{\int_u^v f''(\omega)\ud{\omega}}^2} \\
&\le \prn{\frac{\max f''}{\min f''}}^2 \frac{\int_u^v \omega-u \, d\omega}{(v-u)^2}
\le \frac12\prn{\frac{\max f''}{\min f''}}^2.
\end{align*}
The other bounds (on $\pd{a}{v}$ and the lower bound) have similar proofs.
\end{proof}
\begin{proof}[of Lemma~\ref{lem:merge:lower_bound}]
WLOG we assume that $u_3\ge u_2$, and show for $u_2$.
We bound $A-B(u_2)$ from below:
\begin{align}
A-B(u_2)&=(x_3-x_2)(a(u_2,u_3)-a(u_2,u_2))+(x_4-x_3)(a(u_3,u_4)-a(u_2,u_4)) \nonumber \\
&\ge (x_3-x_2)(u_3-u_2) \min \pd{a}{v}(u,v)+(x_4-x_3)(u_3-u_2)\min \pd{a}{u}(u,v)
\nonumber \\
&\ge (x_4-x_2)(u_3-u_2)\frac12\prn{\frac{\min f''}{\max f''}}^2.
\label{eq:app_bound_below}
\end{align} 
Since we are looking for $\tilde u$ such that $B(\tilde u)=A$, the previous bound is
also a bound on $B(\tilde u)-B(u_2)$.
{}From the Mean Value Theorem we have $\xi\in\brk{\tilde u,\,u_2}$ for which
\begin{align*}
\tilde u - u_2 &= \frac{B(\tilde u)-B(u_2)}{B'(\xi)} \\
&= \frac{B(\tilde u)-B(u_2)}
{(x_2-x_1)\pd{a}{u}(\xi,u_1)+(x_4-x_3)\pd{a}{u}(\xi,u_4)+(x_3-x_2)} \\
&\ge \frac{B(\tilde u)-B(u_2)}{(x_4-x_1)\prn{\frac{\max f''}{\min f''}}^2} \\
\intertext{In the last step we used the upper bound on $\pd{a}{u}$ and that
$\frac{\max f''}{\min f''}\ge 1$. From \eqref{eq:app_bound_below} we conclude that}
\tilde u - u_2&\ge \frac 12\frac{(x_4-x_2)(u_3-u_2)}{x_4-x_1}
\prn{\frac{\min f''}{\max f''}}^4.
\end{align*}
Similarly, one can show that $u_3-\tilde u \ge
\frac12\frac{(x_3-x_1)(u_3-u_2)}{x_4-x_1}\prn{\frac{\min f''}{\max f''}}^4$. 
\end{proof}
\begin{proof}[of Lemma~\ref{lem:merge:upper_bound}]
Again, WLOG we assume that $u_3\ge u_2$. 
This time we first bound $\abs{C(\tilde u)-A}$ from above:
\begin{align*}
\abs{C(\tilde u)-A}&=C(\tilde u)-B(\tilde u) \\
&=\frac{x_3-x_2}{2}(a(u_1,\tilde u)+a(\tilde u, u_4)-2a(\tilde u,\tilde u))\\
&\le (x_3-x_2)\brk{\max(u_i)-\min(u_2,u_3)}.
\end{align*}
Recall that $C(u_{23})=A$, and that $C'>0$ due to the monotonicity of $a$. 
Thus, for some point $\xi$
\begin{align*}
\abs{\tilde u-u_{23}}&= \frac{\abs{C(\tilde u)-C(u_{23})}}{C'(\xi)}\\
&\le \frac{(x_3-x_2)\brk{\max(u_i)-\min(u_2,u_3)}}{\min C'}\\
&\le 2\frac{(x_3-x_2)\brk{\max(u_i)-\min(u_2,u_3)}}{(x_4-x_1)}
\prn{\frac{\max f''}{\min f''}}^2.
\end{align*}
\end{proof}

\end{document}